\documentclass[11pt,oneside,reqno]{amsart}
\usepackage{amssymb,mathrsfs,graphicx,enumerate}
\usepackage{amsmath,amsfonts,amssymb,amscd,amsthm,bbm}
\usepackage{colortbl}
\usepackage{cite}
\usepackage{subcaption}
\usepackage{nccmath}
\usepackage{kotex}
\usepackage{bm}
\usepackage{upgreek}
\title[Moving parallel transport in Riemannian Cucker--Smale dynamics]{Geometric Control of Moving Parallel Transport in Riemannian Cucker--Smale Dynamics with Bonding Forces}

\author[H. Ahn]{Hyunjin Ahn}
\address[Hyunjin Ahn]{\newline Department of Data Technology\newline 
	Myongji University, Seoul 03674, Republic of Korea}
\email{ahj92@mju.ac.kr}

\author[W. Shim]{Woojoo Shim$^*$}
\address[Woojoo Shim]{\newline Department of Mathematics Education \newline Kyungpook National University, Daegu 41566, Republic of Korea}
\email{wjshim@knu.ac.kr}

\subjclass[2020]{34C40, 34D05, 37N35, 53C21, 58K55, 92D50, 93A16}

\keywords{Bonding force, Cucker--Smale model, flocking dynamics, Jacobi fields, Riemannian curvature, Riemannian manifolds}

\thanks{Acknowledgment: The work of H. Ahn was supported by the National Research Foundation of Korea (NRF) grant funded by the Korea government (MSIT) (2022R1C12007321).}
 
\thanks{$^*$ Corresponding author}

\newtheorem{theorem}{Theorem}[section]
\newtheorem{lemma}{Lemma}[section]

\newtheorem{proposition}{Proposition}[section]
\newtheorem{remark}{Remark}[section]

\newtheorem{definition}{Definition}[section]

\newcommand{\vast}{\bBigg@{4}}
\newcommand{\Vast}{\bBigg@{5}}

\begin{document}
	
	\date{\today}
	
\begin{abstract}
	We study a Cucker--Smale type system with bonding forces on complete Riemannian manifolds with uniformly bounded curvature. On general manifolds, the time variation of parallel transport between moving agents produces curvature-dependent terms, so the standard energy-dissipation argument does not directly yield asymptotic velocity alignment. The bonding energy confines all pairwise distances below the injectivity radius, providing global well-posedness and time integrability of the transported velocity discrepancies. To overcome the remaining geometric obstruction, we combine the variation formula for parallel transport with a uniform endpoint estimate for Jacobi fields along moving minimizing geodesics. This yields the uniform regularity needed to convert energy dissipation into asymptotic alignment. Under an energy-dependent injectivity condition and a positive communication bound on the dynamically relevant distance range, we establish asymptotic flocking. Numerical simulations on $\mathbb S^2\times\mathbb R$ illustrate the resulting dynamics in a nonconstant-sectional-curvature setting.
\end{abstract}
	
	\maketitle
	
	\centerline{\date}
	
	%\tableofcontents

	\section{Introduction} \label{sec:1}
\setcounter{equation}{0}
Flocking is one of the most representative collective behaviors
observed in many natural and engineered systems, such as bird flocks,
fish schools, pedestrian groups, autonomous vehicles, and robotic
swarms. In flocking phenomena, individual agents adjust their motions
through local interactions and eventually form a coherent group motion.
The mathematical modeling and analysis of such emergent dynamics have
attracted considerable attention, with particular emphasis on
understanding how local interaction rules lead to global velocity
alignment and stable group formation. When agents move
on curved or geometrically constrained configuration spaces, however,
the geometry of the underlying space becomes an intrinsic part of the
interaction mechanism. This motivates the study of flocking dynamics
in a Riemannian setting.

The Cucker--Smale model introduced in \cite{C-S} is a
standard framework for describing such collective behavior in Euclidean
spaces. Its alignment interaction dissipates velocity fluctuations and,
under suitable conditions, leads to a uniformly bounded configuration
with asymptotically aligned velocities; see the survey chapter
\cite{C-H-L}.

{
Throughout the paper, let $({\mathcal M},g)$ be a complete, connected,
smooth Riemannian manifold without boundary, let $\nabla$ be its
Levi--Civita connection, and let $d(\cdot,\cdot)$ denote the geodesic
distance. We write $T_x{\mathcal M}$ and $T{\mathcal M}$ for the tangent
space at $x\in{\mathcal M}$ and the tangent bundle, respectively.
Velocities based at distinct points belong to different tangent spaces
and cannot be compared without prescribing a transport between them.
Whenever $x,y\in{\mathcal M}$ are joined by a unique length-minimizing
geodesic, we denote by
\[
P_{xy}:T_y{\mathcal M}\longrightarrow T_x{\mathcal M}
\]
the Levi--Civita parallel transport from $y$ to $x$ along this geodesic.
Thus, for $v_i\in T_{x_i}{\mathcal M}$ and
$v_j\in T_{x_j}{\mathcal M}$, the vector
$P_{x_i x_j}v_j-v_i\in T_{x_i}{\mathcal M}$ is the transported velocity
discrepancy used in the model. In particular, $P_{xy}$ is well-defined
whenever $d(x,y)<\operatorname{inj}({\mathcal M})$. Throughout the paper,
asymptotic flocking means uniform boundedness of all pairwise distances
together with decay of these transported velocity discrepancies; the
additional geometric conventions and precise definition are given in
Section~\ref{sec:2.1}. For notational simplicity, we write
$[N]:=\{1,\ldots,N\}$.
}

The Riemannian Cucker--Smale model proposed in
\cite{H-K-S} is governed, for the position--velocity pairs
$\{(x_i,v_i)\}_{i=1}^N$, by the following Cauchy problem:
\begin{equation} \label{CSM}
	\begin{cases}
		\displaystyle
		\dot{x}_i=v_i,
		\quad t>0,
		\quad i\in[N],
		\\[0.2cm]
		\displaystyle
		\nabla_{v_i}v_i
		=
		\frac{\kappa}{N}
		\sum_{j=1}^{N}
		\phi(d(x_i,x_j))
		\left(P_{x_ix_j}v_j-v_i\right),
		\\[0.2cm]
		\displaystyle
		(x_i(0),v_i(0))
		=
		(x_i^0,v_i^0)
		\in T\mathcal M,
	\end{cases}
\end{equation}
where $N$ denotes the number of agents, $\kappa>0$ denotes the
coupling strength, and
$\phi:[0,\infty)\to[0,\infty)$ is a locally Lipschitz continuous
communication weight.

The system \eqref{CSM} and its variants have been
analyzed mainly in special geometric settings. Asymptotic flocking on
the two-dimensional sphere and hyperboloid was studied in
\cite{H-K-S}; related thermodynamic and relativistic models on spheres
and hyperboloids were considered in \cite{A-H-S,A-H-P-S,A-H-K-S}; the
infinite cylindrical domain was treated in \cite{B-H-K-K-S}; and a
mean-field limit was investigated in \cite{A-H-K-S-S}. These settings
provide compactness, explicit representations, or geometric identities
that are not available on a general manifold with variable curvature.

Two coupled difficulties prevent the Euclidean
energy-dissipation argument from carrying over directly. First, the
parallel transport $P_{x_i(t)x_j(t)}$ used to compare two velocities
varies because both endpoints move. Differentiating
\[
t\mapsto
\|P_{x_i(t)x_j(t)}v_j(t)-v_i(t)\|_{x_i(t)}^2
\]
produces curvature-dependent terms represented by
Jacobi fields along the moving minimizing geodesics. Second, the
logarithm maps and parallel transports remain smooth only while the
agents stay inside a common injectivity region, and the same distance
control is required for uniform estimates of the curvature terms. The
alignment dissipation in \eqref{CSM} does not in general provide this
spatial bound. Thus one must obtain both confinement and a geometric
estimate for the time-dependent transport.

{
Bonding interactions provide a natural mechanism for the confinement
part of the problem. Whenever
$d(x,y)<\operatorname{inj}({\mathcal M})$, we write
$\log_x y\in T_x{\mathcal M}$ for the initial velocity of the unique
length-minimizing geodesic from $x$ to $y$, parametrized on $[0,1]$.
We therefore consider the Riemannian Cucker--Smale dynamics with the
bonding term
\[
\frac{\kappa_b}{N}
\sum_{j=1}^N
\phi_b(d(x_i,x_j)^2)\log_{x_i}x_j.
\]
}
The associated potential energy controls the pairwise
distances and keeps the configuration below the injectivity radius.
This does not by itself solve the alignment problem: even after
confinement, the variation of moving parallel transport still generates
curvature-dependent terms that must be controlled without explicit
formulas or constant-curvature cancellations.

More precisely, for the position--velocity pairs
$\{(x_i,v_i)\}_{i=1}^N$, we consider the following Cauchy problem:
\begin{equation}
	\begin{cases} \label{Main}
		\vspace{0.1cm}
		\displaystyle
		\dot{x}_i=v_i,
		\quad t>0,
		\quad i\in[N],
		\\
		\vspace{0.1cm}
		\displaystyle
		\nabla_{v_i}v_i
		=
		\frac{\kappa_f}{N}
		\sum_{j=1}^{N}
		\phi_f(d(x_i,x_j))
		\left(P_{x_ix_j}v_j-v_i\right)
		+
		\frac{\kappa_b}{N}
		\sum_{j=1}^N
		\phi_b(d(x_i,x_j)^2)
		\log_{x_i}x_j,
		\\
		\displaystyle
		(x_i(0),v_i(0))
		=
		(x_i^0,v_i^0)
		\in T\mathcal M,
	\end{cases}
\end{equation}
where $N$ denotes the number of agents, $\kappa_f>0$ and
$\kappa_b>0$ denote the coupling strengths for the flocking and
bonding interactions, respectively. Moreover,
$\phi_f:[0,\infty)\to[0,\infty)$ and
$\phi_b:[0,\infty)\to[0,\infty)$ are locally Lipschitz continuous
communication and bonding kernels, respectively.

Cucker--Smale type systems with bonding forces have
previously been studied in Euclidean spaces and on the circle
\cite{P-K-H,A-B-H-Y1}, in relativistic and Riemannian settings related
to global well-posedness and collision avoidance \cite{A-B-H-Y2}, on
spheres and hyperboloids \cite{A-B-H-Y}, and on manifolds with constant
sectional curvature \cite{Q-L-W}. These works show that bonding forces
are effective for spatial confinement and, in special geometries, for
proving flocking. Under only a uniform curvature bound, however,
differentiating the transported discrepancy
\[
\|P_{x_i x_j}v_j-v_i\|_{x_i}
\]
produces a curvature integral involving a Jacobi field
along a minimizing geodesic whose two endpoints move with the agents.
Consequently, confinement alone does not imply
\[
\lim_{t\to\infty}
\|P_{x_i(t)x_j(t)}v_j(t)-v_i(t)\|_{x_i(t)}
=
0.
\]
A uniform endpoint-to-interior estimate for this family
of moving-endpoint Jacobi fields is therefore needed. This is the
analytical point at which the bounded-variable-curvature setting differs
from the special geometries treated previously.

\vspace{0.2cm}

{
	The preceding discussion leads to the following main
	issue:
	\begin{itemize}
		\item
		\emph{(Main issue):}
		Can the curvature contribution generated by moving parallel
		transport be controlled uniformly under an energy-dependent
		injectivity-radius confinement and a uniform curvature bound, so
		that energy dissipation implies asymptotic alignment without
		explicit formulas or constant-curvature cancellations?
	\end{itemize}
}

In this work, we answer this question affirmatively.
The energy dissipation identity keeps all pairwise distances uniformly
below the injectivity radius, provides a uniform velocity bound and
global well-posedness, and yields time integrability of the squared
transported velocity discrepancies. To convert this integrability into
pointwise decay, we formulate the variation of parallel transport along
the moving minimizing geodesics and isolate a curvature-dependent
remainder. We then establish a uniform endpoint estimate for the
associated Jacobi fields over all geodesics whose lengths remain below
the bonding-energy radius. Combined with the curvature bound, this
estimate gives a uniform bound for the time derivative of the squared
transported velocity discrepancies. Barbalat's lemma yields asymptotic
velocity alignment, and the confinement estimate completes the proof of
asymptotic flocking. The result holds under an energy-dependent
injectivity condition, a positive lower bound on the communication
kernel over the dynamically relevant distance range, and a uniform
bound on the Riemannian curvature tensor.

The rest of this paper is organized as follows. In
Section~\ref{sec:2}, we fix the geometric notation, state the precise
definition of asymptotic flocking, and recall Barbalat's lemma together
with the required facts on curvature and Jacobi fields. In
Section~\ref{sec:3}, we establish global well-posedness and asymptotic
flocking for \eqref{Main}. Section~\ref{sec:4} presents numerical
simulations on $\mathbb S^2\times\mathbb R$, and
Section~\ref{sec:5} concludes the paper.

\section{Preliminaries} \label{sec:2}
\setcounter{equation}{0}
In this section, we fix the geometric notation and the
notion of asymptotic flocking used throughout the paper. We then recall
Barbalat's lemma and the basic facts on the Riemannian curvature tensor,
sectional curvature, and Jacobi fields needed in the subsequent
analysis.

\subsection{Geometric identities and asymptotic flocking} \label{sec:2.1}

{
We retain the notation introduced in Section~\ref{sec:1} and record here
only the additional constructions and identities used in the analysis.
For $v\in T_x{\mathcal M}$, we write
$\|v\|_x:=\sqrt{g_x(v,v)}$.

Let $x,y\in{\mathcal M}$ be joined by a unique length-minimizing
geodesic $\eta_{xy}:[0,1]\to{\mathcal M}$, parametrized so that
$\eta_{xy}(0)=y$ and $\eta_{xy}(1)=x$. For
$w\in T_y{\mathcal M}$, let $W$ be the vector field along
$\eta_{xy}$ satisfying
$\nabla_{\dot\eta_{xy}(s)}W(s)=0$ and $W(0)=w$. Consistently with the
notation introduced in Section~\ref{sec:1}, we then have
$P_{xy}w=W(1)$. Since parallel transport is an isometry, for
$w,w'\in T_y{\mathcal M}$,
\begin{equation} \label{new1}
    P_{xy}P_{yx}=\mathrm{Id}_{T_x{\mathcal M}},\quad
    \|P_{xy}w\|_x=\|w\|_y,\quad
    g_x(P_{xy}w,P_{xy}w')=g_y(w,w'),
\end{equation}
where $\mathrm{Id}_{T_x{\mathcal M}}$ denotes the identity map on
$T_x{\mathcal M}$.

We write $\exp_x$ for the exponential map at $x$. Whenever $y$ lies
in the injectivity domain of $x$, the logarithm map is given by
$\log_x y:=\exp_x^{-1}(y)\in T_x{\mathcal M}$. We denote by
$\operatorname{Cut}(x)$ the cut locus of $x$ and by
\[
\operatorname{inj}({\mathcal M})
:=
\inf_{x\in{\mathcal M}}\operatorname{inj}_x({\mathcal M})
\]
the injectivity radius of ${\mathcal M}$. Thus,
$d(x,y)<\operatorname{inj}({\mathcal M})$ guarantees that the unique
length-minimizing geodesic, $P_{xy}$, and $\log_x y$ used above are
well-defined.

We next recall the identities from \cite{A-B-H-Y2} that will be used
repeatedly. Let $\tau\in(0,\infty]$, and let $x_i(\cdot)$ and
$x_j(\cdot)$ be $C^1$ curves on ${\mathcal M}$ defined on
$[0,\tau)$ with
\[
\dot x_i(t)=v_i(t)\in T_{x_i(t)}{\mathcal M}
\quad\text{and}\quad
\dot x_j(t)=v_j(t)\in T_{x_j(t)}{\mathcal M}.
\]
If
\[
\sup_{0\le t<\tau}d(x_i(t),x_j(t))
<
\operatorname{inj}({\mathcal M}),
\]
then, for all $t\in[0,\tau)$,
\begin{align}\label{new2}
    \begin{aligned}
        & P_{x_i x_j}\big(\log_{x_j}x_i\big)
        =
        -\log_{x_i}x_j,
        \quad
        \|\log_{x_i}x_j\|_{x_i}
        =
        d(x_i,x_j),
        \\
        & \frac{d}{dt}d(x_i,x_j)^2
        =
        2g_{x_i}\big(
        P_{x_i x_j}v_j-v_i,\log_{x_i}x_j
        \big).
    \end{aligned}
\end{align}
For standard background on Riemannian geometry, we refer the reader
to \cite{Ju,P}.

We now state the precise notion of flocking used in this paper.
\begin{definition} \label{D1.1}~
    \emph{(Asymptotic flocking)}
    Let
    \[
    Z(t)=\{(x_i(t),v_i(t))\}_{i=1}^N
    \]
    be an $N$-agent trajectory in $(T\mathcal M)^N$. Assume that the
    parallel transport $P_{x_i(t)x_j(t)}$ is well-defined for all
    $i,j\in [N]$ and $t\ge0$. We say that $Z$ exhibits asymptotic
    flocking if
    \[
    \sup_{t\ge0}\max_{i,j\in[N]}d(x_i(t),x_j(t))<\infty
    \]
    and
    \[
    \lim_{t\to\infty}
    \max_{i,j\in[N]}
    \|P_{x_i(t)x_j(t)}v_j(t)-v_i(t)\|_{x_i(t)}
    =0.
    \]
\end{definition}
}

\subsection{Barbalat's lemma} \label{sec:2.2}
In this subsection, we recall a standard form of Barbalat's lemma. This lemma will be used later to prove the velocity alignment of the system \eqref{Main}.

\begin{lemma} \label{barbalat} \emph{\cite{B}}~\emph{(Barbalat's lemma)}
	Let $f:[0,\infty)\to \mathbb{R}$ be uniformly continuous. Assume that
	\begin{equation*}
		\int_0^\infty f(s)\,ds
	\end{equation*}
	exists and is finite. Then, it follows that
	\begin{equation*}
		\lim_{t\to\infty} f(t)=0.
	\end{equation*}
\end{lemma}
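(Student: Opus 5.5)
The plan is to argue by contradiction. Suppose $f(t)\not\to0$. Then there are $\varepsilon>0$ and a sequence $t_n\to\infty$ with $|f(t_n)|\ge\varepsilon$ for all $n$. After passing to a subsequence, I will assume $t_{n+1}-t_n\ge1$, so that the intervals built below are pairwise disjoint for small radii.

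The first step uses uniform continuity. Applied with tolerance $\varepsilon/2$, it gives some $\delta\in(0,1/2)$ such that $|t-s|\le\delta$ implies $|f(t)-f(s)|\le\varepsilon/2$. Hence on each interval $I_n:=[t_n,t_n+\delta]$ we have $|f|\ge\varepsilon/2$. Since $f$ is continuous and does not vanish on $I_n$, it keeps a fixed sign there. Therefore
\[
\Bigl|\int_{t_n}^{t_n+\delta}f(s)\,ds\Bigr|\ge\frac{\varepsilon\delta}{2}
\quad\text{for every } n.
\]

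The second step uses the existence of the improper integral. Finiteness of $\int_0^\infty f(s)\,ds$ means $F(t):=\int_0^t f(s)\,ds$ converges as $t\to\infty$, so $F$ is Cauchy at infinity. In particular,
\[
\int_{t_n}^{t_n+\delta}f(s)\,ds=F(t_n+\delta)-F(t_n)\longrightarrow0 \quad (n\to\infty).
\]
This contradicts the uniform lower bound $\varepsilon\delta/2$, and the lemma follows.

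The argument is routine. The only point needing care is that we are given only convergence of the improper integral, not absolute integrability. Positivity of $|f|$ alone therefore cannot be used against a tail bound. The fixed-sign observation on $I_n$, which follows from continuity, is exactly what lets the Cauchy property of $F$ be applied directly.
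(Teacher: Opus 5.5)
Your proof is correct and complete. The paper does not prove this lemma itself but cites it from Barbalat's work, and your contradiction argument is the standard proof: uniform continuity forces $f$ to keep a fixed sign with $|f|\ge\varepsilon/2$ on $[t_n,t_n+\delta]$, so $|F(t_n+\delta)-F(t_n)|\ge\varepsilon\delta/2$, contradicting the Cauchy property of $F(t)=\int_0^t f(s)\,ds$. The spacing $t_{n+1}-t_n\ge1$ is harmless but unnecessary, since the argument never uses that the intervals are disjoint.
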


Barbalat's lemma is useful because it converts a global-in-time integrability estimate into pointwise convergence. In our analysis, once the transported velocity discrepancy
\[
\|P_{x_i(t)x_j(t)}v_j(t)-v_i(t)\|_{x_i(t)}^2
\]
is shown to be integrable in time and uniformly continuous, this lemma yields
\[
\lim_{t\to\infty}
\|P_{x_i(t)x_j(t)}v_j(t)-v_i(t)\|_{x_i(t)}
=0.
\]

\subsection{Riemannian curvature tensor, sectional curvature, and Jacobi fields} \label{sec:2.3}
In this subsection, we recall some basic differential-geometric notions related to the Riemannian curvature tensor, sectional curvature, and Jacobi fields. These notions will be used later to control the time variation of parallel transport and to prove the uniform boundedness of the time derivative of the transported velocity discrepancy
\[
\|P_{x_i(t)x_j(t)}v_j(t)-v_i(t)\|_{x_i(t)}^2.
\]

\begin{definition} \label{D2.1} \emph{\cite{Ju,P}}~\emph{(Riemannian curvature, sectional curvature, and Jacobi fields)}
	Let $(\mathcal M,g)$ be a Riemannian manifold and let $\nabla$ be the Levi-Civita connection on $\mathcal M$.
	\begin{enumerate}
		\item \emph{(Riemannian curvature tensor):} The Riemannian curvature tensor $R$ is defined by
		\begin{equation*}
			R(X,Y)Z
			:=
			\nabla_X\nabla_Y Z
			-
			\nabla_Y\nabla_X Z
			-
			\nabla_{[X,Y]}Z
		\end{equation*}
		for smooth vector fields $X,Y,Z$ on $\mathcal M$.
		{
			For $x\in\mathcal M$, we define the operator norm of the curvature tensor by
			\[
			\|R_x\|_{\mathrm{op}}
			:=
			\sup_{\substack{u,v,w\in T_x\mathcal M\\
					\|u\|_x=\|v\|_x=\|w\|_x=1}}
			\|R_x(u,v)w\|_x.
			\]
		}
		\vspace{0.2cm}
		
		\item \emph{(Sectional curvature):} For $x\in\mathcal M$ and linearly independent tangent vectors $u,w\in T_x\mathcal M$, the sectional curvature of the two-dimensional subspace
		\[
		\sigma:=\operatorname{span}\{u,w\}\subset T_x\mathcal M
		\]
		is defined by
		\begin{equation*}
			\sec_x(\sigma)
			:=
			\frac{
				g_x\big(R(u,w)w,u\big)
			}{
				g_x(u,u)g_x(w,w)-g_x(u,w)^2
			}.
		\end{equation*}
		This quantity is independent of the choice of linearly independent vectors $u,w$ spanning $\sigma$.
		\vspace{0.2cm}
		
		\item \emph{(Jacobi fields):} Let $\gamma:[0,1]\to \mathcal M$ be a geodesic, where $s\in[0,1]$ denotes the parameter along the geodesic. A smooth vector field $J$ along $\gamma$ is called a Jacobi field if it satisfies the Jacobi equation
		\begin{equation*}
			\nabla_s^2 J
			+
			R(J,\dot{\gamma})\dot{\gamma}
			=
			0,
			\quad s\in[0,1],
		\end{equation*}
		where $\dot{\gamma}:=d\gamma/ds$, $\nabla_s:=\nabla_{\dot{\gamma}}$ denotes the covariant derivative along $\gamma$, and $\nabla_s^2J:=\nabla_s(\nabla_sJ)$.
	\end{enumerate}
\end{definition}

The Riemannian curvature tensor measures how covariant derivatives fail to commute, and it also describes how parallel transport changes when the underlying path varies. The sectional curvature measures the curvature of a two-dimensional tangent plane and provides a scalar way to formulate curvature bounds on the underlying manifold. A Jacobi field describes the infinitesimal variation of a family of geodesics. In our analysis, the Riemannian curvature tensor and Jacobi fields are used to estimate the curvature-dependent terms that appear when differentiating
$P_{x_i(t)x_j(t)}v_j(t)$ with respect to time, while sectional curvature is recalled for completeness and used to describe the geometric backgrounds considered in the numerical simulations in Section \ref{sec:4}.

\section{Asymptotic flocking} \label{sec:3}
	\setcounter{equation}{0}
	In this section, we prove the global well-posedness and asymptotic flocking of the system \eqref{Main} on a broad class of Riemannian manifolds under the assumption of bounded Riemannian curvature. In particular, our analysis is not restricted to special constant-curvature spaces.
	
	\subsection{Sufficient conditions} \label{sec:3.1}
In this subsection, we present sufficient conditions for asymptotic flocking in terms of the initial data, system parameters, communication and bonding kernels, and the geometry of the underlying Riemannian manifold. We also clarify the meaning of these conditions.

\vspace{0.2cm}

\begin{itemize}
	\item
	$(\mathcal F_1)$ {(Kernels and system parameters):}
	The communication kernel $\phi_f:[0,\infty)\to[0,\infty)$ and the bonding kernel $\phi_b:[0,\infty)\to[0,\infty)$ are locally Lipschitz continuous, and the coupling strengths satisfy
	\[
	\kappa_f>0,\quad \kappa_b>0.
	\]
	
	\item
	$(\mathcal F_2)$ {(Initial energy and injectivity condition):}
We define the bonding-energy radius $U$ and assume that it satisfies the following injectivity condition:
\begin{equation*}
	0<U
	:=
	\sup\left\{
	r\ge0~\bigg|~
	\frac{\kappa_b}{2N}
	\int_0^{r^2}\phi_b(s)\,ds
	\le
	\mathcal E(0)
	\right\}<\operatorname{inj}(\mathcal M),
\end{equation*}
	where the initial energy $\mathcal E(0)$ is defined by
	\begin{equation*}
		\mathcal E(0)
		:=
		\frac12\sum_{i=1}^N\|v_i^0\|_{x_i^0}^2
		+
		\frac{\kappa_b}{4N}
		\sum_{\substack{i,j=1}}^N
		\int_0^{d(x_i^0,x_j^0)^2}\phi_b(s)\,ds.
	\end{equation*}

\item
$(\mathcal F_3)$ {(Positive communication on bounded distances):}
We assume that the communication kernel has a positive lower bound on the relevant distance range, namely
\begin{equation*}
	\underline{\phi_f}
	:=
	\inf_{0\le r\le U}\phi_f(r)
	>
	0.
\end{equation*}
	
	\item
	$(\mathcal F_4)$ {(Bounded Riemannian curvature):}
	The Riemannian curvature tensor of $(\mathcal M,g)$ is uniformly bounded, i.e., there exists a constant $C_R>0$ such that
	\begin{equation*}
		{
			\|R_x\|_{\mathrm{op}}\le C_R,
			\quad x\in\mathcal M.
		}
	\end{equation*}
\end{itemize}
	
\begin{remark} \label{R3.1}~\emph{(Role of the sufficient conditions)}
	The assumptions $(\mathcal F_1)-(\mathcal F_4)$ have the following roles in our analysis.
	\begin{enumerate}
	\item
	The assumption $(\mathcal F_1)$ provides the basic regularity and sign conditions for the model. {
		The local Lipschitz continuity of the communication and bonding
		kernels, together with the smooth dependence of the logarithm map
		and parallel transport on their endpoints inside the injectivity
		region, is used to obtain the local well-posedness of \eqref{Main}.
	} The nonnegativity of these kernels is essential for constructing a nonnegative energy functional and deriving its dissipation structure. More precisely, the nonnegativity of $\phi_b$ makes the bonding energy nonnegative, while the nonnegativity of $\phi_f$ yields the dissipative alignment term in the energy estimate. The positivity of $\kappa_f$ and $\kappa_b$ ensures that these two mechanisms act in the desired direction.
		\vspace{0.2cm}
{
	\item
	The assumption $(\mathcal F_2)$ links the initial energy with the
	bonding interaction. The radius $U$ is determined by the bonding
	potential energy, and the condition
	\[
	U<\operatorname{inj}(\mathcal M)
	\]
	will be used to prove
	\[
	d(x_i(t),x_j(t))
	\leq U
	<
	\operatorname{inj}(\mathcal M),
	\quad
	t\geq0,\quad i,j\in[N].
	\]
	Since all pairwise distances remain uniformly below the injectivity
	radius, any two agents are connected by a unique length-minimizing
	geodesic. Hence, the logarithm maps and parallel transports appearing
	in \eqref{Main} remain well-defined along the solution. Moreover, this
	uniform distance bound gives the first requirement in the definition
	of asymptotic flocking in Definition~\ref{D1.1}, namely the formation
	of a spatially bounded group. Together with the velocity bound obtained
	from the energy estimate, this distance control provides the a priori
	bounds used to establish the global well-posedness in
	Proposition~\ref{P3.1}.
}
			\vspace{0.2cm}
		\item
		The assumption $(\mathcal F_3)$ is essential for proving the asymptotic decay of the transported velocity discrepancies. Once the inter-agent distances are bounded by $U$, this condition gives a positive lower bound for the communication kernel on the relevant distance range. Hence, the energy dissipation estimate yields the time integrability of
		$
		\|P_{x_i(t)x_j(t)}v_j(t)-v_i(t)\|_{x_i(t)}^2
		$
		on $[0,\infty)$. This integrability estimate is the first key step toward velocity alignment via Barbalat's lemma (Lemma \ref{barbalat}), {
			once a uniform derivative bound for the squared transported velocity
			discrepancy is obtained.
		}
			\vspace{0.2cm}
		\item
The assumption $(\mathcal F_4)$ is needed to control the geometric terms arising from the time variation of parallel transport. More precisely, using Jacobi-field estimates together with the boundedness of the Riemannian curvature tensor, we will prove that the time derivative of
$
\|P_{x_i(t)x_j(t)}v_j(t)-v_i(t)\|_{x_i(t)}^2
$
is uniformly bounded. Once the time integrability of this quantity is obtained, this uniform derivative bound will allow us to apply Barbalat's lemma (Lemma \ref{barbalat}) and conclude the asymptotic velocity alignment:
\[
\lim_{t\to\infty}
\max_{i,j\in[N]}
\|P_{x_i(t)x_j(t)}v_j(t)-v_i(t)\|_{x_i(t)}
=0.
\]
	\end{enumerate}
\end{remark}

\subsection{Asymptotic flocking} \label{sec:3.2}
In this subsection, we study the asymptotic flocking of the system \eqref{Main} under the sufficient conditions $(\mathcal F_1)-(\mathcal F_4)$. We first introduce the kinetic energy, the bonding energy, and the total energy as follows: {
	Let $\{(x_i,v_i)\}_{i=1}^N$ be a solution to \eqref{Main}
	defined on an interval $[0,T)$. For $0\leq t<T$, we define
}
\begin{align*}
	&\mathcal E_k(t)
	:=
	\frac12\sum_{i=1}^N\|v_i(t)\|_{x_i(t)}^2,\quad \mathcal E_b(t)
	:=
	\frac{\kappa_b}{4N}
	\sum_{\substack{i,j=1\\ i\neq j}}^N
	\int_0^{d(x_i(t),x_j(t))^2}\phi_b(s)\,ds,\\
	&\mathcal E(t)
	:=
	\mathcal E_k(t)+\mathcal E_b(t).
\end{align*}

We first show that the total energy $\mathcal E$ is nonincreasing along solutions.
\begin{lemma} \label{L3.1}~
	\emph{(Energy functional)}
	Let $\{(x_i,v_i)\}_{i=1}^N$ be a solution to \eqref{Main}
	defined on an interval $[0,T)$ and satisfying
	\[
	d(x_i(t),x_j(t))
	<
	\operatorname{inj}(\mathcal M),
	\quad
	0\leq t<T,\quad i,j\in[N].
	\]
	Assume $(\mathcal F_1)$. Then, for $0<t<T$, the total energy
	satisfies
	\[
	\frac{d}{dt}\mathcal E(t)
	=
	-\frac{\kappa_f}{2N}
	\sum_{i,j=1}^N
	\phi_f(d(x_i(t),x_j(t)))
	\|P_{x_i(t)x_j(t)}v_j(t)-v_i(t)\|_{x_i(t)}^2
	\leq0.
	\]
	In particular,
	\[
	\mathcal E(t)\leq\mathcal E(0),
	\quad 0\leq t<T.
	\]
\end{lemma}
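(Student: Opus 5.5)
We differentiate the kinetic and bonding parts separately and then symmetrize the double sums, exchanging $i$ and $j$ by means of the identities \eqref{new1} and \eqref{new2}.

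\textbf{Kinetic part.} Since $\nabla$ is metric and $\dot x_i=v_i$, we have
\[
\frac{d}{dt}\tfrac12\|v_i\|_{x_i}^2=g_{x_i}(\nabla_{v_i}v_i,v_i).
\]
Substituting \eqref{Main} gives
\[
\frac{d}{dt}\mathcal E_k
=\frac{\kappa_f}{N}\sum_{i,j}\phi_f(d_{ij})\,g_{x_i}(P_{x_ix_j}v_j-v_i,v_i)
+\frac{\kappa_b}{N}\sum_{i,j}\phi_b(d_{ij}^2)\,g_{x_i}(\log_{x_i}x_j,v_i),
\]
where $d_{ij}:=d(x_i,x_j)$.

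\textbf{Alignment term.} We symmetrize it. Swapping $i\leftrightarrow j$ and applying the isometry $P_{x_ix_j}$ via \eqref{new1}, with $P_{x_ix_j}P_{x_jx_i}=\mathrm{Id}$, yields
\[
g_{x_j}(P_{x_jx_i}v_i-v_j,v_j)=g_{x_i}(v_i-P_{x_ix_j}v_j,P_{x_ix_j}v_j).
\]
Averaging the two expressions, and using $\phi_f(d_{ij})=\phi_f(d_{ji})$, turns the alignment term into $-\frac{\kappa_f}{2N}\sum_{i,j}\phi_f(d_{ij})\|P_{x_ix_j}v_j-v_i\|^2_{x_i}$.

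\textbf{Bonding part.} Where $d_{ij}\neq0$, the chain rule together with the derivative identity in \eqref{new2} gives
\[
\frac{d}{dt}\int_0^{d_{ij}^2}\phi_b=2\phi_b(d_{ij}^2)\,g_{x_i}(P_{x_ix_j}v_j-v_i,\log_{x_i}x_j).
\]
For $i=j$ the term is identically zero, so restricting to $i\ne j$ is harmless. Hence
\[
\frac{d}{dt}\mathcal E_b=\frac{\kappa_b}{2N}\sum_{i,j}\phi_b(d_{ij}^2)\big[g_{x_i}(P_{x_ix_j}v_j,\log_{x_i}x_j)-g_{x_i}(v_i,\log_{x_i}x_j)\big].
\]
Using $P_{x_ix_j}\log_{x_j}x_i=-\log_{x_i}x_j$ and the isometry property, the first term equals $-g_{x_j}(v_j,\log_{x_j}x_i)$, which after relabelling is identical to the second. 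Therefore $\frac{d}{dt}\mathcal E_b=-\frac{\kappa_b}{N}\sum_{i,j}\phi_b(d_{ij}^2)\,g_{x_i}(v_i,\log_{x_i}x_j)$, and this exactly cancels the bonding contribution to $\frac{d}{dt}\mathcal E_k$.

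Adding the two parts gives the stated identity. The sign $\le0$ follows from $\phi_f\ge0$ and $\kappa_f>0$ in $(\mathcal F_1)$, and integrating in time gives $\mathcal E(t)\le\mathcal E(0)$.

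\textbf{Main obstacle: regularity of $t\mapsto d_{ij}^2$.} The derivative identity in \eqref{new2} is only available from \cite{A-B-H-Y2} under the distance hypothesis, and smoothness of $d^2$ can fail at collisions $d_{ij}=0$. Collisions are not a problem, because $d^2$ is smooth on the set $\{d<\operatorname{inj}\}$, including the diagonal: there $d(x,y)^2=\|\log_x y\|^2$ and its gradient is $-2\log_x y$. This is exactly what is assumed, so the chain rule with the locally Lipschitz $\phi_b$ applies. In fact $\int_0^{r}\phi_b$ is $C^1$ because $\phi_b$ is continuous, so $\mathcal E$ is $C^1$ and the identity holds pointwise.
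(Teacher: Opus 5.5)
Your proposal is correct and follows the paper's proof essentially step for step. You differentiate $\mathcal E_k$ by metric compatibility, symmetrize the alignment term with the isometry and inverse properties in \eqref{new1}, differentiate $\mathcal E_b$ with the distance identity in \eqref{new2}, and cancel the bonding contributions using $P_{x_ix_j}\log_{x_j}x_i=-\log_{x_i}x_j$ together with an $i\leftrightarrow j$ relabelling. Your closing observation is sound: $d^2$ is smooth on $\{d<\operatorname{inj}(\mathcal M)\}$, including the diagonal, so collisions cause no trouble and $\mathcal E$ is $C^1$. The paper leaves this point implicit in the cited identity \eqref{new2}. One wording fix: it is $d$, not $d^2$, that can fail to be smooth at collisions.
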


\begin{proof}
	By differentiating the kinetic energy and using \eqref{Main}, we find
	\begin{align*}
		\frac{d}{dt}\mathcal E_k(t)
		&=
		\sum_{i=1}^N
		g_{x_i(t)}\big(\nabla_{v_i(t)}v_i(t),v_i(t)\big)\\
		&=
		\frac{\kappa_f}{N}
		\sum_{i,j=1}^N
		\phi_f(d(x_i(t),x_j(t)))
		g_{x_i(t)}
		\big(
		P_{x_i(t)x_j(t)}v_j(t)-v_i(t),v_i(t)
		\big)\\
		&\quad
		+
		\frac{\kappa_b}{N}
		\sum_{i,j=1}^N
		\phi_b(d(x_i(t),x_j(t))^2)
		g_{x_i(t)}
		\big(
		\log_{x_i(t)}x_j(t),v_i(t)
		\big).
	\end{align*}
Using the standard symmetrization trick of interchanging $i$ and $j$, together with the isometry property of parallel transport in \eqref{new1}, the flocking interaction gives
\begin{align*}
	&\frac{\kappa_f}{N}
	\sum_{i,j=1}^N
	\phi_f(d(x_i(t),x_j(t)))
	g_{x_i(t)}
	\big(
	P_{x_i(t)x_j(t)}v_j(t)-v_i(t),v_i(t)
	\big)\\
	&=
	\frac{\kappa_f}{2N}
	\sum_{i,j=1}^N
	\phi_f(d(x_i(t),x_j(t)))
	\Big[
	g_{x_i(t)}
	\big(
	P_{x_i(t)x_j(t)}v_j(t)-v_i(t),v_i(t)
	\big)\\
	&\hspace{5.5cm}
	+
	g_{x_j(t)}
	\big(
	P_{x_j(t)x_i(t)}v_i(t)-v_j(t),v_j(t)
	\big)
	\Big]\\
	&=
	\frac{\kappa_f}{2N}
	\sum_{i,j=1}^N
	\phi_f(d(x_i(t),x_j(t)))
	\Big[
	g_{x_i(t)}
	\big(
	P_{x_i(t)x_j(t)}v_j(t)-v_i(t),v_i(t)
	\big)\\
	&\hspace{5.5cm}
	-
	g_{x_i(t)}
	\big(
	P_{x_i(t)x_j(t)}v_j(t)-v_i(t),
	P_{x_i(t)x_j(t)}v_j(t)
	\big)
	\Big]\\
	&=
	-\frac{\kappa_f}{2N}
	\sum_{i,j=1}^N
	\phi_f(d(x_i(t),x_j(t)))
	\|P_{x_i(t)x_j(t)}v_j(t)-v_i(t)\|_{x_i(t)}^2.
\end{align*}
	On the other hand, by the distance identity in \eqref{new2}, we have
	\begin{align*}
		\frac{d}{dt}\mathcal E_b(t)
		&=
		\frac{\kappa_b}{4N}
		\sum_{\substack{i,j=1\\ i\neq j}}^N
		\phi_b(d(x_i(t),x_j(t))^2)
		\frac{d}{dt}d(x_i(t),x_j(t))^2\\
		&=
		\frac{\kappa_b}{2N}
		\sum_{\substack{i,j=1\\ i\neq j}}^N
		\phi_b(d(x_i(t),x_j(t))^2)
		g_{x_i(t)}
		\big(
		P_{x_i(t)x_j(t)}v_j(t)-v_i(t),
		\log_{x_i(t)}x_j(t)
		\big)\\
		&=
		-\frac{\kappa_b}{N}
		\sum_{\substack{i,j=1\\ i\neq j}}^N
		\phi_b(d(x_i(t),x_j(t))^2)
		g_{x_i(t)}
		\big(
		\log_{x_i(t)}x_j(t),v_i(t)
		\big).
	\end{align*}
	Indeed, by interchanging $i$ and $j$ and using \eqref{new1},
	\[
	\begin{aligned}
		&\sum_{\substack{i,j=1\\i\neq j}}^N
		\phi_b(d(x_i,x_j)^2)
		g_{x_i}
		\left(
		P_{x_ix_j}v_j,\log_{x_i}x_j
		\right)
		\\
		&\quad=
		\sum_{\substack{i,j=1\\i\neq j}}^N
		\phi_b(d(x_i,x_j)^2)
		g_{x_i}
		\left(
		v_i,P_{x_ix_j}\log_{x_j}x_i
		\right)
		\\
		&\quad=
		-
		\sum_{\substack{i,j=1\\i\neq j}}^N
		\phi_b(d(x_i,x_j)^2)
		g_{x_i}
		\left(
		v_i,\log_{x_i}x_j
		\right).
	\end{aligned}
	\]
	Therefore, the bonding contribution in $\frac{d}{dt}\mathcal E_k(t)$ is exactly canceled by $\frac{d}{dt}\mathcal E_b(t)$. Combining the above identities, we derive the desired assertion.
\end{proof}

	In what follows, using Lemma \ref{L3.1}, we obtain uniform bounds for the velocities and the inter-agent distances.
	
{
	\begin{proposition} \label{P3.1}~
		\emph{(Global well-posedness and energy estimates)}
		Assume $(\mathcal F_1)-(\mathcal F_2)$. Then, the Cauchy problem
		\eqref{Main} admits a unique global solution $\{(x_i,v_i)\}_{i=1}^N$. Moreover, for all $t\geq0$,
		\[
		\max_{i\in[N]}
		\|v_i(t)\|_{x_i(t)}
		\leq
		\sqrt{2\mathcal E(0)}
		\]
		and
		\[
		\max_{i,j\in[N]}
		d(x_i(t),x_j(t))
		\leq
		U
		<
		\operatorname{inj}(\mathcal M).
		\]
	\end{proposition}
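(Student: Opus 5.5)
The argument is a standard continuation (bootstrap) argument built on the energy dissipation of Lemma \ref{L3.1}.

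\textbf{Local existence and continuation.} Write \eqref{Main} as a first-order ODE on the open set
\[
\Omega:=\{(x_i,v_i)_{i}\in (T\mathcal M)^N : d(x_i,x_j)<\operatorname{inj}(\mathcal M)\ \text{for all } i,j\}.
\]
On $\Omega$, the maps $\log_{x_i}x_j$ and $P_{x_ix_j}$ depend smoothly on the endpoints, and $(\mathcal F_1)$ makes the right-hand side locally Lipschitz. There is one subtlety: $d(x_i,x_j)$ is not smooth at the diagonal. However, $d$ is Lipschitz, so $\phi_f(d)$ and $\phi_b(d^2)$ are still locally Lipschitz, and $d^2$ is smooth in the injectivity region. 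Hence Picard--Lindel\"of in local charts gives a unique maximal solution on $[0,T^*)$. We have $d(x_i^0,x_j^0)\le U$ initially, because the bonding energy of the single pair $(i,j)$ is bounded by $\mathcal E(0)$ (see below). By the standard ODE continuation criterion, if $T^*<\infty$ then the solution must either leave every compact subset of $\Omega$ or become unbounded.

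\textbf{A priori bounds.} On $[0,T^*)$, Lemma \ref{L3.1} gives $\mathcal E(t)\le\mathcal E(0)$. Because $\phi_b\ge0$, each term of $\mathcal E_b$ is nonnegative. This gives the velocity bound immediately: $\tfrac12\|v_i\|^2\le\mathcal E_k\le\mathcal E(0)$.

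For distances, fix a pair $i\ne j$. The terms $(i,j)$ and $(j,i)$ together give
\[
\frac{\kappa_b}{2N}\int_0^{d(x_i,x_j)^2}\phi_b(s)\,ds\le \mathcal E_b(t)\le\mathcal E(0).
\]
By the definition of $U$ as a supremum in $(\mathcal F_2)$, this yields $d(x_i(t),x_j(t))\le U<\operatorname{inj}(\mathcal M)$. Note that the energy in $(\mathcal F_2)$ includes the diagonal terms, but these vanish, so it agrees with $\mathcal E(0)$ as defined via $\mathcal E_b$.

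\textbf{Main obstacle.} The delicate point is that Lemma \ref{L3.1} presupposes that the pairwise distances stay below $\operatorname{inj}(\mathcal M)$ on the whole interval. This is not circular here, because on the maximal interval the solution lives in $\Omega$ by construction, so the lemma applies there.

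\textbf{Ruling out finite-time blow-up.} Suppose $T^*<\infty$. The bounds above confine the velocities to a ball of radius $\sqrt{2\mathcal E(0)}$. Each $x_i$ moves at speed at most $\sqrt{2\mathcal E(0)}$, so it stays within distance $T^*\sqrt{2\mathcal E(0)}$ of $x_i^0$. By completeness and Hopf--Rinow, closed bounded sets are compact, so the positions remain in a compact set. The pairwise distances satisfy $d\le U<\operatorname{inj}(\mathcal M)$, which places the trajectory in the compact subset
\[
\Big\{(x_i,v_i)_i \in \Omega : d(x_i,x_i^0)\le T^*\sqrt{2\mathcal E(0)},\ \|v_i\|\le\sqrt{2\mathcal E(0)},\ d(x_i,x_j)\le U\Big\}
\]
of $\Omega$. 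This contradicts maximality, so $T^*=\infty$, and the stated bounds hold for all $t\ge0$.
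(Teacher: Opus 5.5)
Your proposal is correct and follows essentially the same route as the paper. You establish local well-posedness on the open set where all pairwise distances are below $\operatorname{inj}(\mathcal M)$, and apply Lemma~\ref{L3.1} on the maximal interval to get $\mathcal E(t)\le\mathcal E(0)$. This gives the velocity bound, and the distance bound via the $(i,j),(j,i)$ bonding terms and the definition of $U$. A Hopf--Rinow compactness argument then keeps the trajectory in a compact subset of that open set, ruling out finite-time blow-up. Your remarks on the non-circular use of Lemma~\ref{L3.1} and on the vanishing diagonal terms in $\mathcal E(0)$ are accurate and match what the paper does implicitly.
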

	
	\begin{proof}
		We first verify that the initial configuration lies inside the
		injectivity region. For each $i,j\in[N]$, the symmetry of the distance
		and the definition of $\mathcal E(0)$ give
		\[
		\frac{\kappa_b}{2N}
		\int_0^{d(x_i^0,x_j^0)^2}
		\phi_b(s)\,ds
		\leq
		\mathcal E(0).
		\]
		Hence, by the definition of $U$ in $(\mathcal F_2)$,
		\[
		d(x_i^0,x_j^0)
		\leq
		U
		<
		\operatorname{inj}(\mathcal M).
		\]
		Consider the open subset
		\[
		\mathscr D
		:=
		\left\{
		\bigl((x_i,v_i)\bigr)_{i=1}^N\in(T\mathcal M)^N
		\;\middle|\;
		d(x_i,x_j)<\operatorname{inj}(\mathcal M)
		\text{ for all }i,j\in[N]
		\right\}.
		\]
On $\mathscr D$, the maps
\[
(x,y)\mapsto\log_x y
\quad\text{and}\quad
(x,y,w)\mapsto P_{xy}w
\]
are smooth, while the distance function is locally Lipschitz.
Together with the local Lipschitz continuity of $\phi_f$ and
$\phi_b$, this shows that the geodesic spray and the force terms
in \eqref{Main} define a locally Lipschitz vector field on
$\mathscr D$. Hence, the standard local existence and uniqueness
theorem for ordinary differential equations on manifolds yields
a unique solution on a maximal interval $[0,T_{\max})$. Lemma~\ref{L3.1}, applied on this maximal interval, gives
		\[
		\mathcal E(t)\leq\mathcal E(0),
		\quad 0\leq t<T_{\max}.
		\]
		Since
		\[
		\mathcal E_k(t)
		=
		\frac12
		\sum_{k=1}^N
		\|v_k(t)\|_{x_k(t)}^2
		\leq
		\mathcal E(t),
		\]
		we obtain, for each $i\in[N]$,
		\[
		\begin{aligned}
			\|v_i(t)\|_{x_i(t)}
			&\leq
			\left(
			\sum_{k=1}^N
			\|v_k(t)\|_{x_k(t)}^2
			\right)^{1/2}
			\\
			&=
			\sqrt{2\mathcal E_k(t)}
			\leq
			\sqrt{2\mathcal E(0)},
			\quad
			0\leq t<T_{\max}.
		\end{aligned}
		\]
		We next estimate the inter-agent distances. Since $\phi_b$ is
		nonnegative, for each $i,j\in[N]$,
		\[
		\begin{aligned}
			\frac{\kappa_b}{2N}
			\int_0^{d(x_i(t),x_j(t))^2}
			\phi_b(s)\,ds
			&\leq
			\frac{\kappa_b}{4N}
			\sum_{k,\ell=1}^N
			\int_0^{d(x_k(t),x_\ell(t))^2}
			\phi_b(s)\,ds
			\\
			&=
			\mathcal E_b(t)
			\leq
			\mathcal E(t)
			\leq
			\mathcal E(0).
		\end{aligned}
		\]
		By the definition of $U$ in $(\mathcal F_2)$, this implies
		\[
		d(x_i(t),x_j(t))
		\leq U,
		\quad 0\leq t<T_{\max}.
		\]
		Consequently,
		\[
		\max_{i,j\in[N]}
		d(x_i(t),x_j(t))
		\leq
		U
		<
		\operatorname{inj}(\mathcal M),
		\quad 0\leq t<T_{\max}.
		\]
		It remains to prove that $T_{\max}=\infty$. Suppose to the contrary
		that $T_{\max}<\infty$, and set
		\[
		V:=\sqrt{2\mathcal E(0)}.
		\]
		For every $i\in[N]$ and $0\leq t<T_{\max}$, the velocity bound gives
		\[
		d(x_i(t),x_i^0)
		\leq
		\int_0^t
		\|v_i(s)\|_{x_i(s)}\,ds
		\leq
		VT_{\max}.
		\]
		Since $\mathcal M$ is complete, the Hopf--Rinow theorem implies that
		the closed metric balls
		\[
		K_i
		:=
		\overline{B}_{\mathcal M}
		\bigl(x_i^0,VT_{\max}\bigr)
		\]
		are compact. The solution therefore remains in
		\[
		\begin{aligned}
			\mathscr K
			:=
			\Bigl\{
			\bigl((x_i,v_i)\bigr)_{i=1}^N\in(T\mathcal M)^N
			\ \Big|\ 
			&x_i\in K_i,\quad
			\|v_i\|_{x_i}\leq V,
			\\
			&d(x_i,x_j)\leq U
			\quad\text{for all }i,j\in[N]
			\Bigr\}.
		\end{aligned}
		\]
		The closed disk bundle of radius $V$ over each compact set $K_i$ is
		compact. Hence, $\mathscr K$ is a compact subset of
		$(T\mathcal M)^N$. Moreover, $U<\operatorname{inj}(\mathcal M)$
		implies that $
		\mathscr K\subset\mathscr D$.
		Thus, the solution remains in a compact subset of the domain on which
		the vector field associated with \eqref{Main} is locally Lipschitz.
		The standard continuation theorem therefore extends the solution
		beyond $T_{\max}$, contradicting the maximality of $T_{\max}$. Hence, we have $
		T_{\max}=\infty$. This proves the global existence and uniqueness of the solution,
		together with the stated energy estimates.
	\end{proof}
}
  
{
	Having established global well-posedness, we next use
	$(\mathcal F_3)$ and the energy dissipation identity to obtain the
	time integrability of the transported velocity discrepancies.
}

\begin{proposition} \label{P3.2}~
	\emph{(Time integrability of transported velocity discrepancies)}
	Assume $(\mathcal F_1)-(\mathcal F_3)$, and let
	$\{(x_i,v_i)\}_{i=1}^N$ be the unique global solution to
	\eqref{Main} given by Proposition~\ref{P3.1}. Then, we have
	\begin{equation*}
		\int_0^\infty
		\sum_{i,j=1}^N
		\|P_{x_i(t)x_j(t)}v_j(t)-v_i(t)\|_{x_i(t)}^2\,dt
		\le
		\frac{2N}{\kappa_f\underline{\phi_f}}\mathcal E(0)<\infty,
	\end{equation*}
	where $\underline{\phi_f}$ is defined in $(\mathcal F_3)$.
\end{proposition}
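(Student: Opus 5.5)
The plan is to integrate the energy dissipation identity of Lemma~\ref{L3.1} in time and bound the communication weight from below using the distance confinement of Proposition~\ref{P3.1}.

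First, by Proposition~\ref{P3.1}, the solution is global and satisfies $d(x_i(t),x_j(t))\le U<\operatorname{inj}(\mathcal M)$ for all $t\ge0$ and $i,j\in[N]$. Hence Lemma~\ref{L3.1} applies on $[0,\infty)$. Moreover, the argument $d(x_i(t),x_j(t))$ of $\phi_f$ always lies in $[0,U]$, so $(\mathcal F_3)$ gives
\[
\phi_f(d(x_i(t),x_j(t)))\ge\underline{\phi_f}>0,\quad t\ge0,\quad i,j\in[N].
\]

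Next, integrate the identity of Lemma~\ref{L3.1} over $[0,T]$ for arbitrary $T>0$. Since $\mathcal E$ is absolutely continuous along the $C^1$ solution, this gives
\[
\mathcal E(T)+\frac{\kappa_f}{2N}\int_0^T\sum_{i,j=1}^N\phi_f(d(x_i,x_j))\|P_{x_ix_j}v_j-v_i\|_{x_i}^2\,dt=\mathcal E(0).
\]
Both $\mathcal E_k$ and $\mathcal E_b$ are nonnegative: the latter because $\phi_b\ge0$ by $(\mathcal F_1)$. Dropping $\mathcal E(T)\ge0$ and inserting the lower bound $\underline{\phi_f}$ yields
\[
\frac{\kappa_f\underline{\phi_f}}{2N}\int_0^T\sum_{i,j}\|P_{x_ix_j}v_j-v_i\|_{x_i}^2\,dt\le\mathcal E(0).
\]

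Finally, the integrand is nonnegative, so letting $T\to\infty$ by monotone convergence gives the stated bound $\frac{2N}{\kappa_f\underline{\phi_f}}\mathcal E(0)$. This is finite because $\mathcal E(0)<\infty$ is a finite sum of finite terms.

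There is no real obstacle here. The only point needing care is that the lower bound from $(\mathcal F_3)$ may be used only on the range $[0,U]$, and this is exactly what the confinement in Proposition~\ref{P3.1} supplies. One should also note that $\mathcal E(0)$ as defined in $(\mathcal F_2)$ includes the diagonal terms $i=j$. Those terms are integrals over $[0,0]$, so they vanish, and the quantity agrees with $\mathcal E(t)$ at $t=0$.
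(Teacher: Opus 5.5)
Your proposal is correct and essentially identical to the paper's proof. You integrate the dissipation identity of Lemma~\ref{L3.1}, drop the nonnegative $\mathcal E(T)$, bound $\phi_f$ below by $\underline{\phi_f}$ using the confinement $d(x_i,x_j)\le U$ from Proposition~\ref{P3.1}, and let $T\to\infty$; your remarks on monotone convergence and the vanishing diagonal terms in $\mathcal E(0)$ only make explicit what the paper leaves implicit.
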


\begin{proof}
Integrating the energy dissipation identity in Lemma \ref{L3.1} over $[0,t]$, we obtain
\begin{align*}
	\frac{\kappa_f}{2N}
	\int_0^t
	\sum_{i,j=1}^N
	\phi_f(d(x_i(s),x_j(s)))
	\|P_{x_i(s)x_j(s)}v_j(s)-v_i(s)\|_{x_i(s)}^2\,ds
	=
	\mathcal E(0)-\mathcal E(t)
	\le
	\mathcal E(0).
\end{align*}
	On the other hand, we combine the second assertion of Proposition \ref{P3.1} and $(\mathcal F_3)$ to observe
	\[
	\phi_f(d(x_i(t),x_j(t)))
	\ge
	\underline{\phi_f}=	\inf_{0\le r\le U}\phi_f(r)
	>
	0,
	\quad t\ge0,\quad i,j\in[N].
	\]
	Therefore, combining the above relations leads to
	\begin{align*}
		\int_0^t
		\sum_{i,j=1}^N
		\|P_{x_i(s)x_j(s)}v_j(s)-v_i(s)\|_{x_i(s)}^2\,ds
		\le
			\frac{2N}{\kappa_f\underline{\phi_f}}\mathcal E(0),
	\end{align*}
which implies that 
	\begin{equation*}
	\int_0^\infty
	\sum_{i,j=1}^N
	\|P_{x_i(t)x_j(t)}v_j(t)-v_i(t)\|_{x_i(t)}^2\,dt
	\le
	\frac{2N}{\kappa_f\underline{\phi_f}}\mathcal E(0)<\infty.
\end{equation*}
This completes the proof.
\end{proof}
	
In the following lemma, we provide a variation formula for parallel transport. This formula shows that, when differentiating the squared transported velocity discrepancy, additional terms appear from the time variation of the connecting geodesic. These terms are expressed through the Riemannian curvature tensor and the Jacobi field generated by the moving endpoints.

\begin{lemma} \label{L3.2}~
	\emph{(Variation formula for transported velocity discrepancies)}
	Let
	\[
	x_i,x_j:[0,T)\to\mathcal M
	\]
	be $C^2$ curves, and set
	\[
	v_i(t):=\dot{x}_i(t),
	\quad
	v_j(t):=\dot{x}_j(t).
	\]
	Assume that
	\[
	d(x_i(t),x_j(t))
	<
	\operatorname{inj}(\mathcal M),
	\quad
	0\leq t<T.
	\]
	For $0\leq s\leq1$, define
	\[
	\gamma_{ij}(t,s)
	:=
	\exp_{x_j(t)}
	\left(
	s\log_{x_j(t)}x_i(t)
	\right).
	\]
Then $\gamma_{ij}(t,\cdot)$ is the unique constant-speed
length-minimizing geodesic from $x_j(t)$ to $x_i(t)$.
Moreover, the map $(t,s)\mapsto\gamma_{ij}(t,s)$ is $C^2$
in $t$ and smooth in $s$.
	We set
	\[
	J_{ij}(t,s):=\partial_t\gamma_{ij}(t,s),
	\quad
	T_{ij}(t,s):=\partial_s\gamma_{ij}(t,s),
	\]
	and
	\[
	W_{ij}(t,s):=
	P_{\gamma_{ij}(t,s)x_j(t)}v_j(t),
	\quad 0\le s\le1.
	\]
Then, for each $0\leq t<T$, one has
	\begin{align*}
		\nabla_{v_i(t)}\Big(P_{x_i(t)x_j(t)}v_j(t)\Big)
		&=
		P_{x_i(t)x_j(t)}\nabla_{v_j(t)}v_j(t)
		+
		\mathcal R_{ij}(t),
	\end{align*}
	where
	\begin{align*}
		\mathcal R_{ij}(t)
		:=
		\int_0^1
		P_{x_i(t)\gamma_{ij}(t,s)}
		\Big(
		R\big(T_{ij}(t,s),J_{ij}(t,s)\big)W_{ij}(t,s)
		\Big)\,ds.
	\end{align*}
	Consequently, one has
	\begin{align*}
		&\frac{d}{dt}
		\|P_{x_i(t)x_j(t)}v_j(t)-v_i(t)\|_{x_i(t)}^2 \nonumber\\
		&\quad =
		2g_{x_i(t)}
		\Big(
		P_{x_i(t)x_j(t)}\nabla_{v_j(t)}v_j(t)
		-
		\nabla_{v_i(t)}v_i(t)
		+
		\mathcal R_{ij}(t),
		\,
		P_{x_i(t)x_j(t)}v_j(t)-v_i(t)
		\Big).
	\end{align*}
\end{lemma}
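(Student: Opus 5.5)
Geodesicity, minimality and regularity come first. Since $d(x_i(t),x_j(t))<\operatorname{inj}(\mathcal M)$, the point $x_i(t)$ lies in the injectivity domain of $x_j(t)$. So $\gamma_{ij}(t,\cdot)$ is the unique minimizing geodesic from $x_j(t)$ to $x_i(t)$, with $T_{ij}(t,0)=\log_{x_j(t)}x_i(t)$. The map $(x,y)\mapsto\log_x y$ is smooth on $\{d<\operatorname{inj}\}$ and $\exp$ is smooth. Composing with the $C^2$ curves $x_i,x_j$ gives that $\gamma_{ij}$ is $C^2$ in $t$ and smooth in $s$. At the endpoints, $J_{ij}(t,0)=v_j(t)$ and $J_{ij}(t,1)=v_i(t)$.

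For the variation formula, I will treat $W_{ij}(t,s)$ as a vector field along the two-parameter map $\gamma_{ij}$. By definition it satisfies $\nabla_s W_{ij}=0$ and $W_{ij}(t,0)=v_j(t)$, with $W_{ij}(t,1)=P_{x_i x_j}v_j$. The key tool is the commutation identity
\[
\nabla_s\nabla_t W-\nabla_t\nabla_s W=R(\partial_s\gamma,\partial_t\gamma)W .
\]
This gives $\nabla_s(\nabla_t W_{ij})=R(T_{ij},J_{ij})W_{ij}$, which is a first-order linear ODE in $s$ along $\gamma_{ij}(t,\cdot)$. Solving it by parallel transport back to $x_i(t)$ (variation of constants) yields
\[
\nabla_t W_{ij}(t,1)=P_{x_i\gamma_{ij}(t,0)}\nabla_tW_{ij}(t,0)+\int_0^1P_{x_i(t)\gamma_{ij}(t,s)}R(T_{ij},J_{ij})W_{ij}\,ds .
\]
Concretely, I take a parallel orthonormal frame $E_k(t,s)$ along each geodesic and write $\partial_s g(\nabla_tW,E_k)=g(R(T,J)W,E_k)$, using $\nabla_sE_k=0$. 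Integrating in $s$ from $0$ to $1$ then gives the formula. Because $\gamma_{ij}(t,0)=x_j(t)$ and $\gamma_{ij}(t,1)=x_i(t)$, the covariant $t$-derivatives at the endpoints are $\nabla_{v_j}v_j$ and $\nabla_{v_i}(P_{x_ix_j}v_j)$ respectively. This is exactly the claimed identity with $\mathcal R_{ij}$.

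The main technical obstacle is justifying the commutation identity and the frame argument when $\gamma_{ij}$ is only $C^2$ in $t$. $W_{ij}$ is $C^1$ in $t$, since parallel transport depends smoothly on the curve data, but one must check that the mixed covariant derivatives exist and commute up to curvature. I would handle this in local coordinates: $\nabla_s\nabla_tW$ involves only $\partial_s\partial_t$ of the components and Christoffel terms. The components are smooth in $s$, and since $\nabla_sW=0$ expresses $\partial_sW$ through $C^1$ data, $\partial_t\partial_sW$ exists and is continuous, so Schwarz's theorem applies. Alternatively, approximate $x_i,x_j$ by smooth curves in $C^2$ and pass to the limit.

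Finally, the derivative of the squared norm follows from metric compatibility along $x_i(t)$:
\[
\tfrac{d}{dt}\|X\|^2=2g(\nabla_{v_i}X,X)\quad\text{with}\quad X=P_{x_ix_j}v_j-v_i .
\]
Inserting $\nabla_{v_i}X=P_{x_ix_j}\nabla_{v_j}v_j+\mathcal R_{ij}-\nabla_{v_i}v_i$ gives the stated expression.
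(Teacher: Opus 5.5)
Your proposal is correct and follows essentially the same route as the paper: you use the commutation identity $\nabla_s\nabla_t W-\nabla_t\nabla_s W=R(T,J)W$ with $\nabla_s W=0$, parallel-transport to $x_i(t)$ and integrate in $s$, identify the endpoint derivatives $\nabla_{v_j}v_j$ and $\nabla_{v_i}(P_{x_ix_j}v_j)$, and finish with metric compatibility for the squared norm. Your remarks on justifying the mixed covariant derivatives under only $C^2$-in-$t$ regularity, and your explicit parallel-frame version of the integration step, are details the paper leaves implicit rather than a different argument.
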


\begin{proof}
	For simplicity, we omit the subscripts $ij$ and write
	\[
	\gamma(t,s)=\gamma_{ij}(t,s),\quad
	J(t,s)=J_{ij}(t,s),\quad
	T(t,s)=T_{ij}(t,s),\quad
	W(t,s)=W_{ij}(t,s).
	\]
	We also write $\nabla_t$ and $\nabla_s$ for the covariant derivatives along the $t$- and $s$-directions of the two-parameter map $\gamma(t,s)$, respectively. By the definition of $W$, we have
	\[
	\nabla_s W(t,s)=0,\quad W(t,0)=v_j(t),
	\quad W(t,1)=P_{x_i(t)x_j(t)}v_j(t).
	\]
	Using the curvature identity (see Definition \ref{D2.1})
	\[
	\nabla_s\nabla_t W-\nabla_t\nabla_s W
	=
	R(T,J)W,
	\]
	and the fact that $\nabla_sW=0$, we obtain
	\begin{equation}\label{new4}
		\nabla_s\nabla_t W
		=
		R(T,J)W,
	\end{equation}
where the Lie bracket term in the definition of the Riemannian curvature tensor disappears because the two parameters $s$ and $t$ commute, i.e., $[\partial_s,\partial_t]=0$.
	Now, parallel-transporting \eqref{new4} from $\gamma(t,s)$ to $x_i(t)=\gamma(t,1)$ and integrating over $s\in[0,1]$, we get
	\begin{align*}
		\nabla_t W(t,1)
		-
		P_{x_i(t)x_j(t)}\nabla_t W(t,0)
		&=
		\int_0^1
		P_{x_i(t)\gamma(t,s)}
		\big(
		\nabla_s\nabla_t W(t,s)
		\big)\,ds\\
		&=
		\int_0^1
		P_{x_i(t)\gamma(t,s)}
		\Big(
		R(T(t,s),J(t,s))W(t,s)
		\Big)\,ds.
	\end{align*}
	Since $W(t,0)=v_j(t)$ is a vector field along $x_j(t)$, we have
	\[
	\nabla_t W(t,0)=\nabla_{v_j(t)}v_j(t),
	\]
	and moreover, one has
	\[
	\nabla_t W(t,1)
	=
	\nabla_{v_i(t)}\Big(P_{x_i(t)x_j(t)}v_j(t)\Big).
	\]
	Therefore, we find
	\begin{align}\label{new5}
	\nabla_{v_i(t)}\Big(P_{x_i(t)x_j(t)}v_j(t)\Big)
	=
	P_{x_i(t)x_j(t)}\nabla_{v_j(t)}v_j(t)
	+
	\mathcal R_{ij}(t),
	\end{align}
	where
	\[
	\mathcal R_{ij}(t)
	=
	\int_0^1
	P_{x_i(t)\gamma_{ij}(t,s)}
	\Big(
	R\big(T_{ij}(t,s),J_{ij}(t,s)\big)W_{ij}(t,s)
	\Big)\,ds.
	\]
	This proves the desired first assertion.
	In what follows, we set
	\[
	Z_{ij}(t)
	:=
	P_{x_i(t)x_j(t)}v_j(t)-v_i(t)
	\in T_{x_i(t)}\mathcal M.
	\]
	By the metric compatibility of the Levi--Civita connection,
	\begin{align}\label{Shim}
	\frac{d}{dt}\|Z_{ij}(t)\|_{x_i(t)}^2
	=
	2g_{x_i(t)}
	\left(
	\nabla_{v_i(t)}Z_{ij}(t),Z_{ij}(t)
	\right).
	\end{align}
	Using \eqref{new5}, one can compute
	\begin{align}\label{new6}
	\nabla_{v_i(t)}Z_{ij}(t)
	=
	P_{x_i(t)x_j(t)}\nabla_{v_j(t)}v_j(t)
	-
	\nabla_{v_i(t)}v_i(t)
	+
	\mathcal R_{ij}(t).
	\end{align}
	Substituting \eqref{new6} into \eqref{Shim} yields
	\begin{align*}
		&\frac{d}{dt}
		\|P_{x_i(t)x_j(t)}v_j(t)-v_i(t)\|_{x_i(t)}^2\\
		&\quad =
		2g_{x_i(t)}
		\Big(
		P_{x_i(t)x_j(t)}\nabla_{v_j(t)}v_j(t)
		-
		\nabla_{v_i(t)}v_i(t)
		+
		\mathcal R_{ij}(t),
		\,
		P_{x_i(t)x_j(t)}v_j(t)-v_i(t)
		\Big).
	\end{align*}
	This completes the proof.
\end{proof}

{
	We first establish a uniform endpoint estimate for Jacobi fields along
	short length-minimizing geodesics.
	
	\begin{lemma}\label{L3.3}
		\emph{(Uniform endpoint estimate for Jacobi fields)}
		Assume that the Riemannian curvature tensor of $\mathcal M$ is
		uniformly bounded, i.e.,
		\[
		\|R_x\|_{\mathrm{op}}\leq C_R,
		\quad x\in\mathcal M.
		\]
		Let
		\[
		0<U<\operatorname{inj}(\mathcal M).
		\]
		Then, there exists a constant
		\[
		C_J=C_J(\mathcal M,U)>0
		\]
		such that, for every constant-speed length-minimizing geodesic
		\[
		\gamma:[0,1]\to\mathcal M
		\]
		satisfying
		\[
		L(\gamma)\leq U
		\]
		and every Jacobi field $J$ along $\gamma$, one has
		\[
		\sup_{0\leq s\leq1}
		\|J(s)\|_{\gamma(s)}
		\leq
		C_J
		\left(
		\|J(0)\|_{\gamma(0)}
		+
		\|J(1)\|_{\gamma(1)}
		\right).
		\]
	\end{lemma}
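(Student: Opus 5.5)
The plan is to reduce the statement to a uniform bound on the inverse of a boundary-value map for a linear ODE. Fix a constant-speed minimizing geodesic $\gamma:[0,1]\to\mathcal M$ with $L:=L(\gamma)\le U$, and choose a parallel orthonormal frame $\{E_k(s)\}$ along $\gamma$. Writing $J(s)=\sum_k y_k(s)E_k(s)$, the Jacobi equation of Definition~\ref{D2.1} becomes
\[
y''(s)+A(s)y(s)=0,\qquad A(s)_{kl}:=g\big(R(E_l,\dot\gamma)\dot\gamma,E_k\big),
\]
with $A(s)$ symmetric. By $(\mathcal F_4)$ it satisfies $\|A(s)\|\le C_R L^2\le C_R U^2$, and $\|J(s)\|=|y(s)|$.

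Next, let $C(s),S(s)$ be the matrix solutions with $C(0)=I$, $C'(0)=0$, $S(0)=0$, $S'(0)=I$. Then
\[
y(s)=C(s)y(0)+S(s)y'(0).
\]
A Gronwall argument applied to $(y,y')$ gives the uniform bounds
\[
\|C(s)\|,\ \|S(s)\|\le e^{\sqrt{C_R}U}\ \text{(up to a harmless constant)}\quad\text{on }[0,1],
\]
depending only on $C_R$ and $U$. If $S(1)$ is invertible, the endpoint data determine $y'(0)$ via
\[
y'(0)=S(1)^{-1}\big(y(1)-C(1)y(0)\big),
\]
and therefore
\[
\sup_s|y(s)|\le \|C\|_\infty|y(0)|+\|S\|_\infty\|S(1)^{-1}\|\big(|y(1)|+\|C(1)\||y(0)|\big).
\]
So everything reduces to a uniform bound $\|S(1)^{-1}\|\le M(C_R,U)$, valid over all admissible $\gamma$. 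Invertibility of $S(1)$ is exactly the statement that $\gamma(1)$ is not conjugate to $\gamma(0)$ along $\gamma$. This holds because $L\le U<\operatorname{inj}(\mathcal M)$: every geodesic from $\gamma(0)$ stays minimizing, and hence free of conjugate points, up to length $\operatorname{inj}(\mathcal M)$.

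The main obstacle is making the bound on $S(1)^{-1}$ uniform, since $\mathcal M$ need not be compact. I would handle it in two steps.

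\emph{Short geodesics.} When $U<\pi/(2\sqrt{C_R})$, I would use Rauch comparison with the upper bound $\sec\le C_R$, which follows from $\|R\|_{\mathrm{op}}\le C_R$. For a Jacobi field with $J(0)=0$ this gives
\[
\|J(1)\|\ge \frac{\operatorname{sn}_{C_R}(L)}{L}\,\|\nabla_sJ(0)\|,
\]
where $\operatorname{sn}_{C_R}(r)=\sin(\sqrt{C_R}\,r)/\sqrt{C_R}$. Hence
\[
\|S(1)^{-1}\|\le \frac{L}{\operatorname{sn}_{C_R}(L)},
\]
which is bounded uniformly for $L\le U$.

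\emph{General $U<\operatorname{inj}(\mathcal M)$.} Here I would exploit the margin $U<U'<\operatorname{inj}(\mathcal M)$. Each $\gamma$ extends to a minimizing geodesic of length $U'$, so no point of $[0,U'/L]$ is conjugate to $0$ along the extension. I would then argue by contradiction. Suppose there are geodesics $\gamma_n$ and Jacobi fields $J_n$ normalized by $\sup_s\|J_n(s)\|=1$ with $\|J_n(0)\|+\|J_n(1)\|\to0$. In frames, the coefficients $A_n$ are uniformly bounded, so after passing to a subsequence $A_n\rightharpoonup A$ weakly-$*$ in $L^\infty$, and the solutions $y_n$ converge uniformly to a nontrivial solution $y$ of $y''+Ay=0$ with $y(0)=y(1)=0$.

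To get a contradiction, I need that the limit problem, with the extended interval, is still free of conjugate points. I would try to obtain this from the index form. Minimality of the extended $\gamma_n$ up to length $U'$ gives a quantitative coercivity of the index form $I_n$ on $[0,1]$ for fields vanishing at the endpoints: test fields can be stretched to the longer interval, where $I_n\ge0$. This coercivity should pass to the weak-$*$ limit, since $\int\langle A_n y,y\rangle$ converges for uniformly convergent $y$. It would then contradict the existence of the nontrivial limit $y$ with $y(0)=y(1)=0$. If passing the coercivity to the limit fails, the fallback is to strengthen the hypothesis to $U<\pi/(2\sqrt{C_R})$, for which the Rauch step already gives an explicit $C_J$.
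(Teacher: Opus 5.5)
\textbf{Gap: the contradiction in the general case $U<\operatorname{inj}(\mathcal M)$ is never derived.} Your framework is the same as the paper's: a parallel frame, $\|\mathcal K_n\|_{\mathrm{op}}\le C_RU^2$, contradiction from $\sup_s\|J_n(s)\|=1$ and $\|J_n(0)\|+\|J_n(1)\|\to0$, Arzel\`a--Ascoli for $\mathbf y_n$, weak-$*$ compactness of the coefficients, and a margin below $\operatorname{inj}(\mathcal M)$. The problem is the last step.

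Your proposed ``quantitative coercivity'' of the index form on $[0,1]$ does not follow from stretching. Replacing $\mathbf z$ by $\mathbf z(\cdot/\Lambda)$ on $[0,\Lambda]$ gives an inequality involving $\mathcal K_n(\Lambda\sigma)$, i.e.\ curvature along a different part of the geodesic. It says nothing about $\int_0^1\big(|\mathbf z'|^2-\langle\mathcal K_n\mathbf z,\mathbf z\rangle\big)\,ds$. Extending $\mathbf z$ by zero instead only gives the non-strict inequality $I_n^{[0,1]}\ge0$. That inequality also holds on a half great circle of $\mathbb S^2$, where a Jacobi field vanishes at both ends. It is also fully compatible with your limit, since $\mathbf y$ itself has index form $0$ on $[0,1]$. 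A gap uniform in $n$ would itself need a compactness argument of the kind you are trying to finish, so this step is essentially the lemma and remains unproved.

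The fallback $U<\pi/(2\sqrt{C_R})$, where your Rauch step is fine, proves a strictly weaker lemma. $\pi/\sqrt{C_R}$ can be far below $\operatorname{inj}(\mathcal M)$: on hyperbolic space $C_R=1$ while $\operatorname{inj}=\infty$. The main theorem assumes only $U<\operatorname{inj}(\mathcal M)$.

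\textbf{The missing idea: no coercivity is needed.}
\begin{enumerate}
\item Extend every geodesic by a fixed factor $\Lambda>1$ with $\Lambda U<\operatorname{inj}(\mathcal M)$. Do not extend to length $U'$, since $U'/L_n$ is unbounded if $L_n\to0$.
\item For each fixed admissible $\mathbf z$ on $[0,\Lambda]$ with $\mathbf z(0)=\mathbf z(\Lambda)=0$, minimality gives $\mathcal Q_n[\mathbf z]:=\int_0^\Lambda\big(|\mathbf z'|^2-\langle\mathcal K_n\mathbf z,\mathbf z\rangle\big)\,ds\ge0$. Pass only this nonnegativity to the weak-$*$ limit $\mathcal Q$; this is immediate because $\mathbf z\mathbf z^{\mathsf T}\in L^1$.
\item Extend the limit $\mathbf y$ by zero to $\widetilde{\mathbf y}$ on $[0,\Lambda]$. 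Integration by parts with $\mathbf y(0)=\mathbf y(1)=0$ gives $\mathcal Q[\widetilde{\mathbf y}]=0$. So $\widetilde{\mathbf y}$ minimizes a nonnegative quadratic form.
\item Its first variation with respect to the associated bilinear form $\mathcal B$ is $\mathcal B(\widetilde{\mathbf y},\mathbf z)=\langle\mathbf y'(1),\mathbf z(1)\rangle$, and it must vanish for every admissible $\mathbf z$. Hence $\mathbf y'(1)=0$.
\item Together with $\mathbf y(1)=0$, uniqueness for the linear ODE forces $\mathbf y\equiv0$, contradicting $\sup|\mathbf y|=1$.
\end{enumerate}
This broken-Jacobi-field argument is exactly where $\Lambda>1$ is used. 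It would also close your $\|S(1)^{-1}\|$ reduction: a blow-up of $\|S_n(1)^{-1}\|$ produces the same kind of limit, with $\mathbf y(0)=\mathbf y(1)=0$ and $\mathbf y'(0)\neq0$.
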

	
	\begin{proof}
		Choose $\Lambda>1$ such that
		\[
		\Lambda U<\operatorname{inj}(\mathcal M).
		\]
		Since $\mathcal M$ is complete, every geodesic
		$\gamma:[0,1]\to\mathcal M$ with $L(\gamma)\leq U$ can be extended
		with the same constant speed to a geodesic
		\[
		\gamma:[0,\Lambda]\to\mathcal M.
		\]
		The extended length satisfies
		\[
		L\bigl(\gamma|_{[0,\Lambda]}\bigr)
		=
		\Lambda L(\gamma)
		\leq
		\Lambda U
		<
		\operatorname{inj}(\mathcal M),
		\]
		and hence the extended geodesic is still length-minimizing.
		Suppose, to the contrary, that the asserted estimate fails. Then, there
		exist constant-speed length-minimizing geodesics
		\[
		\gamma_n:[0,\Lambda]\to\mathcal M
		\]
		with
		\[
		L\bigl(\gamma_n|_{[0,1]}\bigr)\leq U,
		\]
		and Jacobi fields $J_n$ along $\gamma_n|_{[0,1]}$ such that
		\[
		\frac{
			\|J_n(0)\|_{\gamma_n(0)}
			+
			\|J_n(1)\|_{\gamma_n(1)}
		}{
			\displaystyle
			\sup_{0\leq s\leq1}
			\|J_n(s)\|_{\gamma_n(s)}
		}
		\longrightarrow0.
		\]
		After normalization, we may assume that
		\begin{equation}\label{eq:normalized-jacobi}
			\sup_{0\leq s\leq1}
			\|J_n(s)\|_{\gamma_n(s)}
			=
			1,
			\quad
			\|J_n(0)\|_{\gamma_n(0)}
			+
			\|J_n(1)\|_{\gamma_n(1)}
			\longrightarrow0.
		\end{equation}
		Choose a parallel orthonormal frame
		\[
		\{E_{n,1}(s),\ldots,E_{n,d}(s)\}
		\]
		along $\gamma_n$. On $[0,1]$, write
		\[
		J_n(s)
		=
		\sum_{\alpha=1}^d
		y_n^\alpha(s)E_{n,\alpha}(s),
		\]
		and define
		\[
		\mathbf y_n(s)
		:=
		\bigl(
		y_n^1(s),\ldots,y_n^d(s)
		\bigr)^{\mathsf T}
		\in\mathbb R^d.
		\]
		Then, $\mathbf y_n$ satisfies
		\begin{equation}\label{eq:coordinate-jacobi}
			\mathbf y_n''(s)
			+
			\mathcal K_n(s)\mathbf y_n(s)
			=
			0,
			\quad 0\leq s\leq1,
		\end{equation}
		where the symmetric matrix $\mathcal K_n(s)$ is defined by
		\[
		\bigl(\mathcal K_n(s)\bigr)_{\alpha\beta}
		:=
		g_{\gamma_n(s)}
		\left(
		R\bigl(
		E_{n,\beta}(s),\dot\gamma_n(s)
		\bigr)
		\dot\gamma_n(s),
		E_{n,\alpha}(s)
		\right).
		\]
		Since $\gamma_n$ is parametrized with constant speed,
		\[
		\|\dot\gamma_n(s)\|_{\gamma_n(s)}
		=
		L\bigl(\gamma_n|_{[0,1]}\bigr)
		\leq U,
		\quad 0\leq s\leq\Lambda.
		\]
		Therefore, the curvature bound gives
		\begin{equation}\label{eq:curvature-matrix-bound}
			\|\mathcal K_n(s)\|_{\mathrm{op}}
			\leq
			C_RU^2,
			\quad 0\leq s\leq\Lambda.
		\end{equation}
		Since the frame is orthonormal, \eqref{eq:normalized-jacobi} implies
		\[
		\|\mathbf y_n\|_{L^\infty(0,1)}=1.
		\]
		Moreover, by \eqref{eq:coordinate-jacobi} and
		\eqref{eq:curvature-matrix-bound},
		\[
		\|\mathbf y_n''\|_{L^\infty(0,1)}
		\leq
		C_RU^2.
		\]
		For every $s\in[0,1]$, we have
		\[
		\mathbf y_n(1)-\mathbf y_n(0)
		=
		\int_0^1\mathbf y_n'(r)\,dr,
		\]
		and hence
		\[
		\mathbf y_n'(s)
		=
		\mathbf y_n(1)-\mathbf y_n(0)
		-
		\int_0^1
		\bigl(
		\mathbf y_n'(r)-\mathbf y_n'(s)
		\bigr)\,dr.
		\]
		Consequently,
		\begin{align*}
			|\mathbf y_n'(s)|
			&\leq
			|\mathbf y_n(1)|
			+
			|\mathbf y_n(0)|
			+
			\int_0^1
			|\mathbf y_n'(r)-\mathbf y_n'(s)|\,dr
			\\
			&\leq
			2\|\mathbf y_n\|_{L^\infty(0,1)}
			+
			\|\mathbf y_n''\|_{L^\infty(0,1)}
			\int_0^1|r-s|\,dr
			\\
			&\leq
			2+\frac12C_RU^2.
		\end{align*}
		Thus, $\mathbf y_n$, $\mathbf y_n'$, and $\mathbf y_n''$ are
		uniformly bounded on $[0,1]$.
		By the Arzel\`a--Ascoli theorem, after passing to a subsequence,
		\[
		\mathbf y_n
		\rightarrow
		\mathbf y
		\quad\text{in }C^1([0,1];\mathbb R^d).
		\]
		After taking a further subsequence if necessary, the
		Banach--Alaoglu theorem also gives
		\[
		\mathcal K_n
		\rightharpoonup^\ast
		\mathcal K
		\quad\text{in}\quad
		L^\infty
		\bigl(
		(0,\Lambda);\mathbb R^{d\times d}
		\bigr)
		\]
		for some symmetric matrix-valued function $\mathcal K$. Passing to
		the limit in \eqref{eq:coordinate-jacobi}, we obtain
		\[
		\mathbf y''+\mathcal K\mathbf y=0
		\quad\text{on }(0,1)
		\]
		in the distributional sense. Moreover,
		\eqref{eq:normalized-jacobi} gives
		\begin{equation}\label{eq:limiting-boundary-data}
			\mathbf y(0)=\mathbf y(1)=0,
			\quad
			\sup_{0\leq s\leq1}|\mathbf y(s)|=1.
		\end{equation}
		Since $\mathcal K\in L^\infty$ and $\mathbf y\in C^1$, the limiting
		equation implies
		\[
		\mathbf y\in W^{2,\infty}((0,1);\mathbb R^d).
		\]
	In particular, the integrations by parts used below are justified.
	Now, let
	\[
	\mathbf z:[0,\Lambda]\to\mathbb R^d
	\]
	be a continuous piecewise $C^1$ function satisfying
	\[
	\mathbf z(0)=\mathbf z(\Lambda)=0.
	\]
	Using the parallel frame along $\gamma_n$, define the vector field
	\[
	X_n^{\mathbf z}(s)
	:=
	\sum_{\alpha=1}^d
	z^\alpha(s)E_{n,\alpha}(s).
	\]
	Recall from \cite{Ju,P} that, for a vector field $X$ along $\gamma_n$ vanishing at the
	endpoints, the index form associated with $\gamma_n$ is given by
	\[
	I_{\gamma_n}(X,X)
	:=
	\int_0^\Lambda
	\left[
	\|\nabla_sX(s)\|^2
	-
	g\big(R(X(s),\dot\gamma_n(s))\dot\gamma_n(s),X(s)\big)
	\right]ds.
	\]
	Since $\gamma_n$ is a constant-speed length-minimizing geodesic on
	$[0,\Lambda]$, the second variation formula for the geodesic energy
	implies that
	\[
	I_{\gamma_n}(X,X)\ge0
	\]
	for every continuous piecewise $C^1$ vector field $X$ along $\gamma_n$
	vanishing at the endpoints (see \cite{Ju,P} for details). Applying
	this to $X_n^{\mathbf z}$ and using that the frame
	$\{E_{n,\alpha}\}_{\alpha=1}^d$ is parallel along $\gamma_n$, we obtain
	\begin{equation}\label{eq:second-variation-nonnegative}
		\mathcal Q_n[\mathbf z]
		:=
		\int_0^\Lambda
		\left[
		|\mathbf z'(s)|^2
		-
		\left\langle
		\mathcal K_n(s)\mathbf z(s),
		\mathbf z(s)
		\right\rangle_{\mathbb R^d}
		\right]ds
		\geq0.
	\end{equation}
		Passing to the limit in
		\eqref{eq:second-variation-nonnegative}, we obtain
		\begin{equation}\label{eq:limiting-quadratic-form}
			\mathcal Q[\mathbf z]
			:=
			\int_0^\Lambda
			\left[
			|\mathbf z'(s)|^2
			-
			\left\langle
			\mathcal K(s)\mathbf z(s),
			\mathbf z(s)
			\right\rangle_{\mathbb R^d}
			\right]ds
			\geq0.
		\end{equation}
		Extend $\mathbf y$ by zero from $[0,1]$ to $[0,\Lambda]$ and denote
		the resulting function by
		\[
		\widetilde{\mathbf y}(s)
		:=
		\begin{cases}
			\mathbf y(s), & 0\leq s\leq1,\\
			0, & 1\leq s\leq\Lambda.
		\end{cases}
		\]
		By \eqref{eq:limiting-boundary-data},
		$\widetilde{\mathbf y}$ is continuous and piecewise $C^1$, with
		\[
		\widetilde{\mathbf y}(0)
		=
		\widetilde{\mathbf y}(\Lambda)
		=
		0.
		\]
		Therefore, it is admissible in
		\eqref{eq:limiting-quadratic-form}. Using the limiting Jacobi
		equation on $[0,1]$ and integration by parts, we obtain
		\begin{align}
			\mathcal Q[\widetilde{\mathbf y}]
			&=
			\int_0^1
			\left[
			|\mathbf y'(s)|^2
			-
			\left\langle
			\mathcal K(s)\mathbf y(s),
			\mathbf y(s)
			\right\rangle_{\mathbb R^d}
			\right]ds
			\nonumber\\
			&=
			\left[
			\left\langle
			\mathbf y'(s),\mathbf y(s)
			\right\rangle_{\mathbb R^d}
			\right]_{s=0}^{s=1}
			=
			0.
			\label{eq:zero-quadratic-form}
		\end{align}
		Define the associated symmetric bilinear form by
		\[
		\mathcal B(\mathbf u,\mathbf z)
		:=
		\int_0^\Lambda
		\left[
		\left\langle
		\mathbf u'(s),\mathbf z'(s)
		\right\rangle_{\mathbb R^d}
		-
		\left\langle
		\mathcal K(s)\mathbf u(s),
		\mathbf z(s)
		\right\rangle_{\mathbb R^d}
		\right]ds.
		\]
		Then, one has
		\[
		\mathcal Q[\mathbf z]
		=
		\mathcal B(\mathbf z,\mathbf z).
		\]
		For every admissible $\mathbf z$ and every
		$\varepsilon\in\mathbb R$, relations
		\eqref{eq:limiting-quadratic-form} and
		\eqref{eq:zero-quadratic-form} give
		\[
		\begin{aligned}
			0
			&\leq
			\mathcal Q
			\bigl[
			\widetilde{\mathbf y}
			+
			\varepsilon\mathbf z
			\bigr]
			\\
			&=
			2\varepsilon
			\mathcal B
			\bigl(
			\widetilde{\mathbf y},
			\mathbf z
			\bigr)
			+
			\varepsilon^2
			\mathcal Q[\mathbf z].
		\end{aligned}
		\]
		Considering both positive and negative $\varepsilon$, we conclude
		that
		\begin{equation}\label{eq:bilinear-vanishing}
			\mathcal B
			\bigl(
			\widetilde{\mathbf y},
			\mathbf z
			\bigr)
			=
			0
		\end{equation}
		for every admissible $\mathbf z$.
		On the other hand, the limiting Jacobi equation and integration by
		parts yield
		\begin{align*}
			\mathcal B
			\bigl(
			\widetilde{\mathbf y},
			\mathbf z
			\bigr)
			&=
			\int_0^1
			\left[
			\left\langle
			\mathbf y'(s),\mathbf z'(s)
			\right\rangle_{\mathbb R^d}
			-
			\left\langle
			\mathcal K(s)\mathbf y(s),
			\mathbf z(s)
			\right\rangle_{\mathbb R^d}
			\right]ds
			\\
			&=
			\left[
			\left\langle
			\mathbf y'(s),\mathbf z(s)
			\right\rangle_{\mathbb R^d}
			\right]_{s=0}^{s=1}
			\\
			&=
			\left\langle
			\mathbf y'(1),
			\mathbf z(1)
			\right\rangle_{\mathbb R^d},
		\end{align*}
		where we used $\mathbf z(0)=0$. Since admissible test functions can
		be chosen with arbitrary value at $s=1$,
		\eqref{eq:bilinear-vanishing} implies
		\[
		\mathbf y'(1)=0.
		\]
		Together with $\mathbf y(1)=0$, uniqueness for the linear
		second-order equation with bounded coefficients gives
		\[
		\mathbf y\equiv0
		\quad\text{on }[0,1],
		\]
		which contradicts
		\[
		\sup_{0\leq s\leq1}|\mathbf y(s)|=1.
		\]
		This completes the proof.
	\end{proof}
}

	Using the variation formula in Lemma~\ref{L3.2} and the uniform
	Jacobi-field estimate in Lemma~\ref{L3.3}, we next obtain a uniform
	derivative bound for the squared transported velocity discrepancies.
	
	\begin{proposition}\label{P3.3}
		{
			\emph{(Uniform derivative bound for squared transported velocity
				discrepancies)}
		}
	{
		Assume $(\mathcal F_1)$, $(\mathcal F_2)$, and $(\mathcal F_4)$,
		and let $\{(x_i,v_i)\}_{i=1}^N$ be the unique global solution to
		\eqref{Main} given by Proposition~\ref{P3.1}.
	} Then there
		exists a constant $C>0$, independent of $t$, such that
		\[
		\max_{i,j\in[N]}
		\left|
		\frac{d}{dt}
		\|P_{x_i(t)x_j(t)}v_j(t)-v_i(t)\|_{x_i(t)}^2
		\right|
		\leq C,
		\quad t>0.
		\]
	\end{proposition}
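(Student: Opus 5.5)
We start from the identity in Lemma~\ref{L3.2} and bound each factor on the right-hand side uniformly in $t$. With $Z_{ij}(t):=P_{x_i(t)x_j(t)}v_j(t)-v_i(t)$, Cauchy--Schwarz gives
\[
\Big|\frac{d}{dt}\|Z_{ij}\|^2\Big|
\le 2\big(\|\nabla_{v_j}v_j\|_{x_j}+\|\nabla_{v_i}v_i\|_{x_i}+\|\mathcal R_{ij}\|_{x_i}\big)\|Z_{ij}\|_{x_i},
\]
using the isometry property \eqref{new1}. By Proposition~\ref{P3.1}, $\|Z_{ij}\|_{x_i}\le 2V$ with $V:=\sqrt{2\mathcal E(0)}$, and all pairwise distances are bounded by $U$. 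The applicability of Lemma~\ref{L3.2} needs $x_i\in C^2$. This follows because the right-hand side of \eqref{Main} is continuous along the solution, so $v_i$ is $C^1$.

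\emph{Step 1: force bounds.} From \eqref{Main}, \eqref{new1} and \eqref{new2},
\[
\|\nabla_{v_i}v_i\|_{x_i}\le \kappa_f\Big(\max_{[0,U]}\phi_f\Big)\,2V+\kappa_b\Big(\max_{[0,U^2]}\phi_b\Big)\,U.
\]
The maxima are finite because $\phi_f$ and $\phi_b$ are continuous on compact intervals.

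\emph{Step 2: curvature remainder.} By $(\mathcal F_4)$ and the fact that parallel transport is an isometry,
\[
\|\mathcal R_{ij}(t)\|\le C_R\int_0^1\|T_{ij}(t,s)\|\,\|J_{ij}(t,s)\|\,\|W_{ij}(t,s)\|\,ds .
\]
Here $\|W_{ij}\|=\|v_j\|\le V$, and $\|T_{ij}(t,s)\|=d(x_i,x_j)\le U$ since $\gamma_{ij}(t,\cdot)$ has constant speed on $[0,1]$.

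The remaining factor is $J_{ij}(t,\cdot)=\partial_t\gamma_{ij}(t,\cdot)$. It is the variation field of a family of geodesics, so for each fixed $t$ it is a Jacobi field along $\gamma_{ij}(t,\cdot)$. Its endpoint values are $J_{ij}(t,0)=\dot x_j=v_j$ and $J_{ij}(t,1)=\dot x_i=v_i$. Because $\gamma_{ij}(t,\cdot)$ is a constant-speed minimizing geodesic of length at most $U<\operatorname{inj}(\mathcal M)$, Lemma~\ref{L3.3} applies and gives $\sup_s\|J_{ij}(t,s)\|\le C_J(\|v_j\|+\|v_i\|)\le 2C_JV$. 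Hence $\|\mathcal R_{ij}\|\le 2C_RC_JUV^2$, uniformly in $t$.

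\emph{Step 3: conclusion.} Combining the three bounds gives a constant $C$ depending only on $\kappa_f,\kappa_b,\phi_f,\phi_b,U,V,C_R,C_J$, and not on $t$, $i$ or $j$.

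The main point needing care is Step~2. We must justify that $\partial_t\gamma_{ij}$ is a genuine smooth Jacobi field with exactly those endpoint values, which requires the $C^2$ regularity of the variation from Lemma~\ref{L3.2}. We also must check that the geodesics fall in the class covered by Lemma~\ref{L3.3}, including the case $x_i=x_j$, where the geodesic degenerates to a constant curve. In that degenerate case $T_{ij}=0$, so $\mathcal R_{ij}=0$ trivially and no Jacobi estimate is needed.
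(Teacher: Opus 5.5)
Your proposal is correct and follows the paper's proof essentially step for step. It uses the same force bound $2\kappa_f\overline{\phi_f}V+\kappa_b\overline{\phi_b}U$, the same curvature-remainder bound $2C_RC_JUV^2$ obtained from Lemma~\ref{L3.3} with endpoint values $v_j,v_i$, and the same final constant $4V(2A+B)$; you also make explicit two points the paper leaves implicit, namely the $C^2$ regularity needed for Lemma~\ref{L3.2} and the degenerate case $x_i(t)=x_j(t)$, where $T_{ij}\equiv0$ gives $\mathcal R_{ij}=0$.
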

	
	\begin{proof}
		It is enough to consider the case $i\neq j$, since the case $i=j$ is
		trivial. Throughout the proof, we use the notation
		\[
		d_{ij}(t):=d(x_i(t),x_j(t)),
		\quad
		P_{ij}(t):=P_{x_i(t)x_j(t)}.
		\]
		By Proposition~\ref{P3.1}, there exists a constant
		\[
		V:=\sqrt{2\mathcal E(0)}
		\]
		such that
		\begin{align}\label{estimate}
			\max_{i\in[N]}\|v_i(t)\|_{x_i(t)}
			&\leq V,
			\nonumber\\
			\max_{i,j\in[N]}d_{ij}(t)
			&\leq U<\operatorname{inj}(\mathcal M),
			\quad t\geq0.
		\end{align}
		In particular, all logarithm maps and parallel transports appearing
		below are well-defined.
		Since $\phi_f$ and $\phi_b$ are locally Lipschitz continuous, they
		are bounded on compact intervals. We set
		\[
		\overline{\phi_f}
		:=
		\sup_{0\leq r\leq U}\phi_f(r)
		<\infty,
		\quad
		\overline{\phi_b}
		:=
		\sup_{0\leq r\leq U^2}\phi_b(r)
		<\infty.
		\]
		Using \eqref{Main}, the velocity bound in \eqref{estimate}, and
		\eqref{new2}, we obtain
		\begin{align*}
			\|\nabla_{v_i(t)}v_i(t)\|_{x_i(t)}
			&\leq
			\frac{\kappa_f}{N}
			\sum_{k=1}^N
			\phi_f(d(x_i(t),x_k(t)))
			\|P_{x_i(t)x_k(t)}v_k(t)-v_i(t)\|_{x_i(t)}
			\\
			&\quad+
			\frac{\kappa_b}{N}
			\sum_{k=1}^N
			\phi_b(d(x_i(t),x_k(t))^2)
			\|\log_{x_i(t)}x_k(t)\|_{x_i(t)}
			\\
			&\leq
			2\kappa_f\overline{\phi_f}V
			+
			\kappa_b\overline{\phi_b}U.
		\end{align*}
		Hence, there exists a constant $A>0$, independent of $i$ and $t$,
		such that
		\begin{equation}\label{C0}
			\max_{i\in[N]}
			\|\nabla_{v_i(t)}v_i(t)\|_{x_i(t)}
			\leq A,
			\quad t>0.
		\end{equation}
		We next estimate the curvature remainder in Lemma~\ref{L3.2}.
		For each $i,j\in[N]$ with $i\neq j$, let
		\[
		\gamma_{ij}(t,\cdot):[0,1]\to\mathcal M
		\]
		be the unique constant-speed length-minimizing geodesic from
		$x_j(t)$ to $x_i(t)$. We set
		\[
		J_{ij}(t,s):=\partial_t\gamma_{ij}(t,s),
		\quad
		T_{ij}(t,s):=\partial_s\gamma_{ij}(t,s),
		\]
		and
		\[
		W_{ij}(t,s)
		:=
		P_{\gamma_{ij}(t,s)x_j(t)}v_j(t),
		\quad 0\leq s\leq1.
		\]
		For notational simplicity, we omit the base points from the norms of
		vector fields along $\gamma_{ij}(t,\cdot)$.
		The vector field $J_{ij}(t,\cdot)$ is a Jacobi field along
		$\gamma_{ij}(t,\cdot)$ with endpoint values
		\begin{align}\label{C1}
			J_{ij}(t,0)=v_j(t),
			\quad
			J_{ij}(t,1)=v_i(t).
		\end{align}
		Moreover, we have
		\[
		L\bigl(\gamma_{ij}(t,\cdot)\bigr)
		=
		d_{ij}(t)
		\leq U.
		\]
		Therefore, Lemma~\ref{L3.3}, \eqref{C1}, and the velocity bound imply
		\[
		\begin{aligned}
			\sup_{0\leq s\leq1}\|J_{ij}(t,s)\|
			&\leq
			C_J
			\left(
			\|J_{ij}(t,0)\|
			+
			\|J_{ij}(t,1)\|
			\right)
			\\
			&=
			C_J
			\left(
			\|v_j(t)\|_{x_j(t)}
			+
			\|v_i(t)\|_{x_i(t)}
			\right)
			\\
			&\leq
			2C_JV.
		\end{aligned}
		\]
		Since $\gamma_{ij}(t,\cdot)$ is parametrized with constant speed and
		parallel transport is an isometry, we also have
		\[
		\|T_{ij}(t,s)\|
		=
		d_{ij}(t)
		\leq U
		\]
		and
		\[
		\|W_{ij}(t,s)\|
		=
		\|v_j(t)\|_{x_j(t)}
		\leq V,
		\quad 0\leq s\leq1.
		\]
		Using assumption $(\mathcal F_4)$, we obtain
		\begin{align*}
			\|\mathcal R_{ij}(t)\|_{x_i(t)}
			&\leq
			\int_0^1
			\left\|
			R\bigl(
			T_{ij}(t,s),J_{ij}(t,s)
			\bigr)
			W_{ij}(t,s)
			\right\|\,ds
			\\
			&\leq
			\int_0^1
			C_R
			\|T_{ij}(t,s)\|
			\|J_{ij}(t,s)\|
			\|W_{ij}(t,s)\|\,ds
			\\
			&\leq
			2C_RC_JUV^2.
		\end{align*}
		Hence, there exists a constant $B>0$, independent of $i$, $j$, and
		$t$, such that
		\begin{equation}\label{eq:curvature-remainder-bound}
		\|\mathcal R_{ij}(t)\|_{x_i(t)}
		\leq B,
		\quad
		i,j\in[N],\quad i\neq j,\quad t\geq0.
		\end{equation}
		Finally, Lemma~\ref{L3.2} yields
		\begin{align*}
			&\frac{d}{dt}
			\|P_{ij}(t)v_j(t)-v_i(t)\|_{x_i(t)}^2
			\\
			&\quad=
			2g_{x_i(t)}
			\Bigl(
			P_{ij}(t)\nabla_{v_j(t)}v_j(t)
			-
			\nabla_{v_i(t)}v_i(t)
			+
			\mathcal R_{ij}(t),
			P_{ij}(t)v_j(t)-v_i(t)
			\Bigr).
		\end{align*}
		Using \eqref{new1}, \eqref{C0},
		\eqref{eq:curvature-remainder-bound}, the triangle inequality, and
		the velocity bound, we obtain
		\begin{align*}
			&
			\left|
			\frac{d}{dt}
			\|P_{ij}(t)v_j(t)-v_i(t)\|_{x_i(t)}^2
			\right|
			\\
			&\leq
			2
			\Bigl(
			\|\nabla_{v_j(t)}v_j(t)\|_{x_j(t)}
			+
			\|\nabla_{v_i(t)}v_i(t)\|_{x_i(t)}
			+
			\|\mathcal R_{ij}(t)\|_{x_i(t)}
			\Bigr)
			\|P_{ij}(t)v_j(t)-v_i(t)\|_{x_i(t)}
			\\
			&\leq
			2(2A+B)
			\left(
			\|v_j(t)\|_{x_j(t)}
			+
			\|v_i(t)\|_{x_i(t)}
			\right)
			\\
			&\leq
			4V(2A+B).
		\end{align*}
		Therefore, setting
		\[
		C:=4V(2A+B),
		\]
		we conclude that
		\[
		\max_{i,j\in[N]}
		\left|
		\frac{d}{dt}
		\|P_{x_i(t)x_j(t)}v_j(t)-v_i(t)\|_{x_i(t)}^2
		\right|
		\leq C,
		\quad t>0.
		\]
		This completes the proof.
	\end{proof}

Finally, we are ready to prove the desired asymptotic flocking of \eqref{Main} under $(\mathcal F_1)-(\mathcal F_4)$.

\begin{theorem} \label{T3.1}~\emph{(Asymptotic flocking)}
{
	Assume $(\mathcal F_1)-(\mathcal F_4)$, and let
	$\{(x_i,v_i)\}_{i=1}^N$ be the unique global solution to
	\eqref{Main} given by Proposition~\ref{P3.1}.
} Then, the solution exhibits asymptotic flocking in the sense of Definition \ref{D1.1}. More precisely,
	\begin{equation*}
		\sup_{t\ge0}\max_{i,j\in[N]}d(x_i(t),x_j(t))
		\le
		U
		<
		\operatorname{inj}(\mathcal M),
	\end{equation*}
	and
	\begin{equation*}
		\lim_{t\to\infty}
		\max_{i,j\in[N]}
		\|P_{x_i(t)x_j(t)}v_j(t)-v_i(t)\|_{x_i(t)}
		=0.
	\end{equation*}
\end{theorem}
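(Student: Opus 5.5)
The plan is to assemble the theorem directly from Propositions~\ref{P3.1}, \ref{P3.2}, and \ref{P3.3} together with Barbalat's lemma (Lemma~\ref{barbalat}).

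\textbf{Spatial bound.} The first assertion is immediate from the second estimate of Proposition~\ref{P3.1}: under $(\mathcal F_1)$–$(\mathcal F_2)$ we have $d(x_i(t),x_j(t))\le U<\operatorname{inj}(\mathcal M)$ for all $t\ge0$ and all $i,j\in[N]$. Taking the supremum over $t$ gives the bound. It also ensures that every $P_{x_i(t)x_j(t)}$ is well-defined for all times, as Definition~\ref{D1.1} requires.

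\textbf{Velocity alignment.} For a fixed pair $i,j\in[N]$, set
\[
f_{ij}(t):=\|P_{x_i(t)x_j(t)}v_j(t)-v_i(t)\|_{x_i(t)}^2\ge0 .
\]
By Proposition~\ref{P3.2}, which uses $(\mathcal F_3)$, we get $\int_0^\infty f_{ij}(t)\,dt<\infty$. Since $f_{ij}$ is nonnegative, this improper integral exists and is finite.

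Next, $f_{ij}$ is $C^1$ on $(0,\infty)$, by Lemma~\ref{L3.2} applied to the $C^2$ trajectories. Proposition~\ref{P3.3}, which uses $(\mathcal F_4)$, gives $|f_{ij}'(t)|\le C$ for $t>0$ with $C$ independent of $t$. Hence $f_{ij}$ is Lipschitz on $[0,\infty)$, with continuity at $t=0$ coming from the mean value theorem, and in particular it is uniformly continuous.

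Barbalat's lemma then gives $f_{ij}(t)\to0$, so $\|P_{x_i(t)x_j(t)}v_j(t)-v_i(t)\|_{x_i(t)}\to0$. Since there are only finitely many pairs, the maximum over $i,j\in[N]$ also tends to zero.

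\textbf{Where the difficulty lies.} Essentially all of the analytic difficulty has already been handled in the uniform derivative bound of Proposition~\ref{P3.3}, that is, in the Jacobi-field endpoint estimate of Lemma~\ref{L3.3}. What remains here is routine bookkeeping. The only points to check are that the solution of Proposition~\ref{P3.1} is regular enough ($C^2$ curves) for Lemma~\ref{L3.2} to apply, and that the derivative bound on $(0,\infty)$ yields uniform continuity on all of $[0,\infty)$.
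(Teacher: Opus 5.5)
Your proposal is correct and follows the paper's proof essentially verbatim: the distance bound comes from Proposition~\ref{P3.1}, integrability from Proposition~\ref{P3.2}, uniform continuity from the derivative bound of Proposition~\ref{P3.3}, and Barbalat's lemma then gives the decay pair by pair. Your extra checks (the $C^1$ regularity of $f_{ij}$, continuity at $t=0$, and existence of the improper integral because the integrand is nonnegative) are details the paper leaves implicit.
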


\begin{proof}
	By Proposition \ref{P3.3}, there exists a positive constant $C>0$ such that, for all $i,j\in[N]$ and $t>0$,
	\begin{equation*}
		\left|
		\frac{d}{dt}
		\|P_{x_i(t)x_j(t)}v_j(t)-v_i(t)\|_{x_i(t)}^2
		\right|
		\le C.
	\end{equation*}
	Hence, for each $i,j\in[N]$, the functional
	\[
	t\mapsto
	\|P_{x_i(t)x_j(t)}v_j(t)-v_i(t)\|_{x_i(t)}^2
	\]
	is uniformly continuous on $[0,\infty)$.
		On the other hand, Proposition \ref{P3.2} yields the time integrability of the transported velocity discrepancies:
	\begin{equation*}
		\int_0^\infty
		\|P_{x_i(t)x_j(t)}v_j(t)-v_i(t)\|_{x_i(t)}^2\,dt
		<
		\infty,
		\quad i,j\in[N].
	\end{equation*}
	Therefore, employing Barbalat's lemma (see Lemma \ref{barbalat}), we obtain
	\[
	\lim_{t\to\infty}
	\|P_{x_i(t)x_j(t)}v_j(t)-v_i(t)\|_{x_i(t)}^2
	=0,
	\quad i,j\in[N],
	\]
	which implies that
	\[
	\lim_{t\to\infty}
	\max_{i,j\in[N]}
	\|P_{x_i(t)x_j(t)}v_j(t)-v_i(t)\|_{x_i(t)}
	=0.
	\]
	Subsequently, we recall that the second assertion of Proposition \ref{P3.1} gives the uniform boundedness of the inter-agent distances:
	\[
	\sup_{t\ge0}\max_{i,j\in[N]}d(x_i(t),x_j(t))
	\le
	U
	<
	\operatorname{inj}(\mathcal M).
	\]
	Combining this distance bound with the asymptotic velocity alignment above, we conclude that the solution exhibits asymptotic flocking in the sense of Definition \ref{D1.1}. This completes the proof.
\end{proof}

\section{Numerical simulations} \label{sec:4}
\setcounter{equation}{0}
In this section, we provide numerical simulations for the system \eqref{Main}. We first present an admissible example on the product manifold $\mathbb S^2\times\mathbb R$ satisfying the sufficient conditions $(\mathcal F_1)-(\mathcal F_4)$. This manifold has bounded Riemannian curvature and is not a constant-sectional-curvature space. Hence, it provides a useful example beyond the standard geometric settings previously considered, such as Euclidean spaces, spheres, hyperboloids, and infinite cylindrical domains. We then present numerical results to support the theoretical results obtained in Section \ref{sec:3}.

\subsection{An admissible example on $\mathbb S^2\times\mathbb R$} \label{sec:4.1}
We first consider the product manifold
\[
\mathcal M=\mathbb S^2\times\mathbb R
\]
equipped with the product metric, where
\[
\mathbb S^2:=\{p\in\mathbb R^3~|~\|p\|_{\mathbb R^3}=1\}
\]
denotes the unit sphere in $\mathbb R^3$, and $\|\cdot\|_{\mathbb R^3}$ is the standard Euclidean norm. This manifold is noncompact and has bounded Riemannian curvature, but it is not a constant-sectional-curvature space. Indeed, the sectional curvature of the plane tangent to the $\mathbb S^2$-factor is $1$, while the sectional curvature of every mixed plane spanned by one direction tangent to $\mathbb S^2$ and one direction tangent to $\mathbb R$ is zero. Thus, this example is beyond the standard constant-curvature settings. At the same time, the product structure allows us to compute the geodesic distance, logarithm map, and parallel transport componentwise. The geometric formulas on $\mathbb S^2$ used below are standard and can be found, for example, in \cite{A-H-S,A-B-H-Y}.
Let
\[
N=10,\quad \kappa_f=\kappa_b=1,
\]
and choose the kernels
\[
\phi_f(r)=1,\quad \phi_b(r)=1,\quad r\ge0.
\]
Then, $(\mathcal F_1)$ is satisfied. Moreover, since $\phi_f\equiv1$, we have
\[
\underline{\phi_f}
=
\inf_{0\le r\le U}\phi_f(r)
=
1>0,
\]
so that $(\mathcal F_3)$ holds.
For $x=(p,z),y=(q,w)\in\mathbb S^2\times\mathbb R$, where $p,q\in\mathbb S^2$ and $z,w\in\mathbb R$, the distance is given by
\[
d_{\mathcal M}(x,y)^2
=
d_{\mathbb S^2}(p,q)^2+|z-w|_{\mathbb R}^2,
\quad
d_{\mathbb S^2}(p,q)=\arccos(p\cdot q),
\]
where $p\cdot q$ is the standard Euclidean inner product in $\mathbb R^3$, and $|\cdot|_{\mathbb R}$ denotes the standard Euclidean distance on the $\mathbb R$-factor.
If $p\neq -q$, then the logarithm map is
\[
\log_x y
=
\left(
\log_p^{\mathbb S^2}q,\,
w-z
\right),
\]
where
\[
\log_p^{\mathbb S^2}q
=
\frac{\theta}{\sin\theta}(q-\cos\theta\,p),
\quad
\theta:=\arccos(p\cdot q).
\]
For $v=(u,\xi)\in T_y\mathcal M=T_q\mathbb S^2\times\mathbb R$, the parallel transport from $y=(q,w)$ to $x=(p,z)$ is
\[
P_{xy}v
=
\left(
P_{pq}^{\mathbb S^2}u,\,
\xi
\right),
\]
where
\[
P_{pq}^{\mathbb S^2}u
=
u-\frac{u\cdot p}{1+p\cdot q}(p+q),
\quad p\neq -q.
\]
Additionally, let
\[
x(t)=(p(t),z(t))\in\mathbb S^2\times\mathbb R
\]
be a smooth curve, and let
\[
V(t)=(Y(t),\eta(t))
\]
be a vector field along $x(t)$. Then the product connection is given by
\[
\nabla_{\dot x(t)}^{\mathbb S^2\times\mathbb R}V(t)
=
\left(
\nabla_{\dot p(t)}^{\mathbb S^2}Y(t),
\dot\eta(t)
\right).
\]
In particular, if $p(t)\in\mathbb S^2$ and $u(t)=\dot p(t)$, then in the ambient space $\mathbb R^3$,
\[
\nabla^{\mathbb S^2}_{u(t)}u(t)
=
\dot u(t)+\|u(t)\|_{\mathbb R^3}^2p(t).
\]
We now choose admissible initial data. Let
\[
\varepsilon=\delta=10^{-2},
\quad
\alpha_i:=\frac{2\pi(i-1)}{10},
\quad i\in [10],
\]
and then we set
\[
p_i^0
:=
\left(
\sqrt{1-\varepsilon^2},\,
\varepsilon\cos\alpha_i,\,
\varepsilon\sin\alpha_i
\right)
\in\mathbb S^2,
\quad
z_i^0:=\varepsilon\cos\alpha_i,
\]
and
\[
x_i^0:=(p_i^0,z_i^0)\in\mathbb S^2\times\mathbb R.
\]
For the initial velocities, we define
\[
u_i^0
:=
\delta
\left(
0,\,
-\sin\alpha_i,\,
\cos\alpha_i
\right)
\in T_{p_i^0}\mathbb S^2,
\quad
\xi_i^0:=\delta\sin\alpha_i,
\]
and
\[
v_i^0:=(u_i^0,\xi_i^0)\in T_{x_i^0}\mathcal M.
\]
Then, one has $p_i^0\cdot u_i^0=0$, and hence $u_i^0$ is tangent to $\mathbb S^2$ at $p_i^0$.
Since $\phi_b\equiv1$, the initial energy satisfies
\[
\mathcal E(0)
=
\frac12\sum_{i=1}^{10}\|v_i^0\|_{x_i^0}^2
+
\frac{1}{40}
\sum_{i,j=1}^{10}
d(x_i^0,x_j^0)^2.
\]
For the above initial data with $\varepsilon=\delta=10^{-2}$, we have
\[
\mathcal E(0)
=
\frac{3}{4}\cdot10^{-3}
+
\frac14
\sum_{k=0}^{9}
\left[
\arccos\left(
1-10^{-4}\left(1-\cos\frac{2\pi k}{10}\right)
\right)^2
+
10^{-4}\left(1-\cos\frac{2\pi k}{10}\right)
\right].
\]
Numerically, this gives
\[
\mathcal E(0)
\approx
1.5000125006\times 10^{-3}.
\]
Moreover, for $\phi_b\equiv1$ and $N=10$, the bonding-energy radius is
\[
U
=
\sup\left\{
r\ge0~\bigg|~
\frac{1}{20}r^2\le \mathcal E(0)
\right\}
=
\sqrt{20\mathcal E(0)}.
\]
Therefore, we find
\[
U
\approx
0.1732058025
<
\pi
=
\operatorname{inj}(\mathbb S^2\times\mathbb R).
\]
Hence, $(\mathcal F_2)$ is satisfied. Finally, since the $\mathbb S^2$-factor has constant sectional
curvature $1$ and the $\mathbb R$-factor is flat, the curvature tensor
of $\mathbb S^2\times\mathbb R$ is uniformly bounded. In fact, one may
take $C_R=1$. Hence, $(\mathcal F_4)$ holds. Consequently, the above choice gives an admissible example satisfying $(\mathcal F_1)-(\mathcal F_4)$.

\subsection{Numerical simulations} \label{sec:4.2}
In this subsection, we present numerical results for the admissible example introduced in Section \ref{sec:4.1}. Specifically, we illustrate the uniform boundedness of inter-agent distances and the decay of the transported velocity discrepancies. For the simulation, we use the explicit Euler method with time step $h=0.001$. The simulation is performed on the time interval
\[
0\leq t\leq20.
\] 
Since the direct Euler update in the ambient Euclidean coordinates does not necessarily preserve the constraint $p_i(t)\in\mathbb S^2$, we project the spherical component onto $\mathbb S^2$ after each time step. We also project the spherical velocity component onto the tangent space $T_{p_i(t)}\mathbb S^2$. The $\mathbb R$-component is updated by the standard Euler method.

\begin{figure}[h!]
	\includegraphics[width=0.86\linewidth]{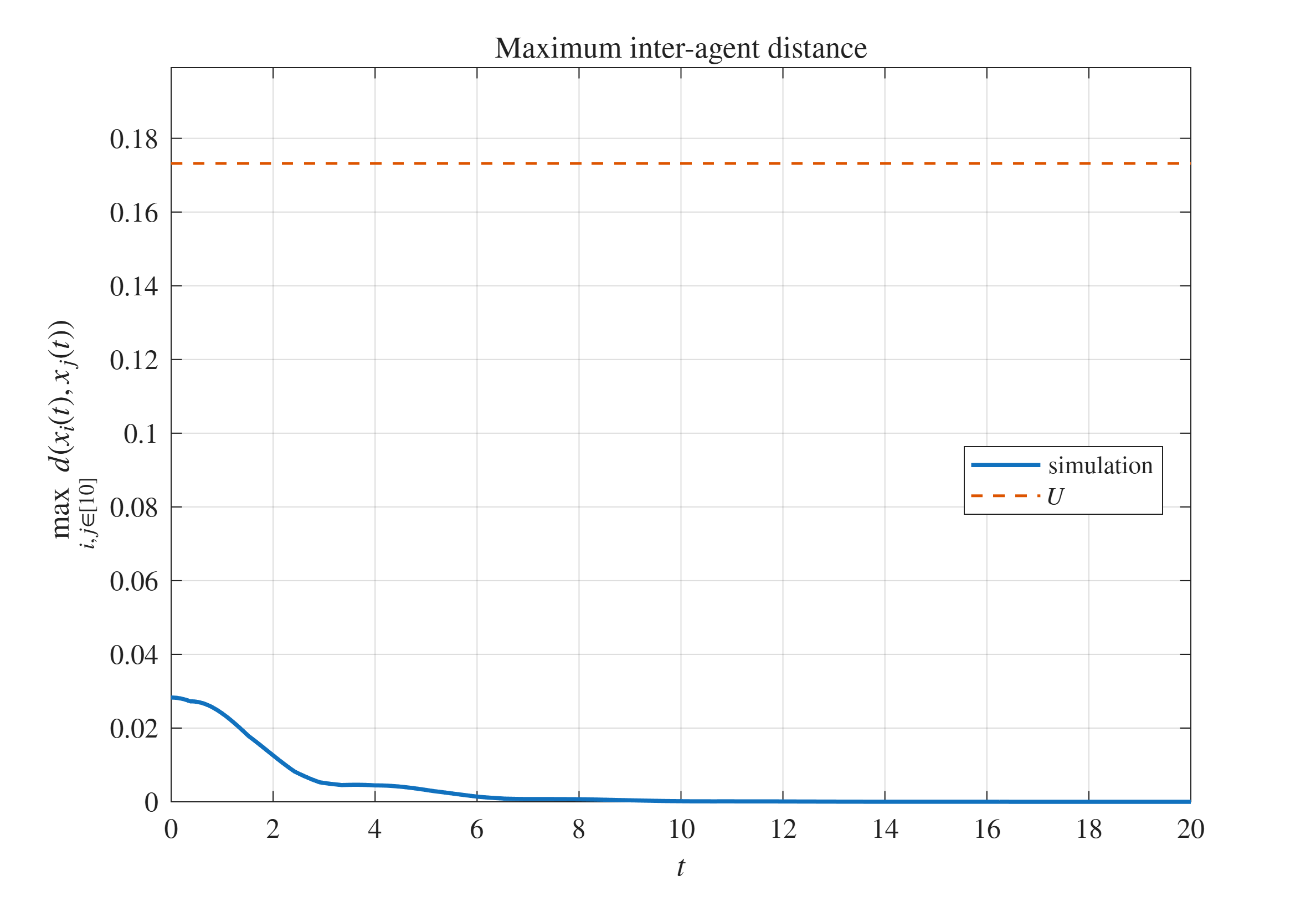}
	\caption{Time evolution of $\max_{i,j\in[10]}d(x_i,x_j)$}
	\label{first}
\end{figure}

Figure~\ref{first} shows the time evolution of the maximum inter-agent
distance
\[
\max_{i,j\in[10]}d(x_i(t),x_j(t)).
\]
As shown in the figure, the maximum inter-agent distance remains below the bonding-energy radius
\[
U\approx 0.1732058025,
\]
computed in Section~\ref{sec:4.1}. This numerically supports the second assertion of Proposition~\ref{P3.1}, namely the uniform distance estimate
\[
d(x_i(t),x_j(t))\le U,\quad i,j\in[10],\quad t\ge0.
\]
In this particular simulation, the maximum inter-agent distance also appears to decrease toward zero. However, this stronger behavior is not part of our theoretical result. It should be understood as a feature of the present numerical setting: the bonding kernel is constant, $\phi_b\equiv1$, so the bonding force is purely attractive, and the trajectories remain in a small localized region of $\mathbb S^2\times\mathbb R$ where the product geometry is close to the Euclidean one. Thus, the observed decrease of inter-agent distances should not be interpreted as a general consequence of our theoretical results.

\begin{figure}[h!]
	\includegraphics[width=0.86\linewidth]{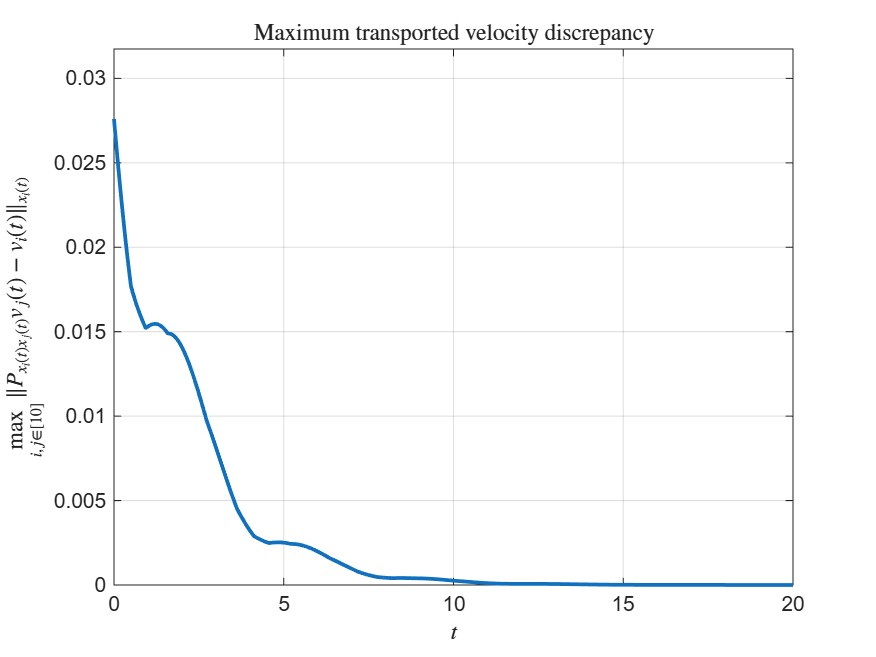}
	\caption{Time evolution of $\max_{i,j\in[10]}\|P_{x_ix_j}v_j-v_i\|_{x_i}$}
	\label{second}
\end{figure}

Next, Figure~\ref{second} shows the time evolution of the maximum transported velocity discrepancy
\[
\max_{i,j\in[10]}
\|P_{x_i(t)x_j(t)}v_j(t)-v_i(t)\|_{x_i(t)}.
\]
We observe that this quantity decays to zero as time evolves. This numerical result supports the asymptotic velocity-alignment estimate in Theorem~\ref{T3.1}, namely
\[
\lim_{t\to\infty}
\max_{i,j\in[10]}
\|P_{x_i(t)x_j(t)}v_j(t)-v_i(t)\|_{x_i(t)}
=0.
\]
Thus, together with Figure~\ref{first}, the simulation illustrates the asymptotic flocking behavior predicted by our theoretical result. Moreover, since the example is posed on $\mathbb S^2\times\mathbb R$, which has bounded Riemannian curvature but is not a constant-sectional-curvature space, the numerical results also demonstrate the applicability of our framework beyond the standard constant-curvature settings.

\section{Conclusion}
\label{sec:5}

In this paper, we established a geometric closure mechanism for asymptotic flocking in a Riemannian Cucker--Smale system with bonding forces under bounded, not necessarily constant, curvature. The bonding interaction plays the distinct role of confinement: under an energy-dependent injectivity condition, it keeps all pairwise distances below the injectivity radius. Together with the kinetic-energy estimate, this yields global well-posedness and uniform velocity bounds, while the energy dissipation gives
\[
\int_0^\infty
\sum_{i,j=1}^N
\|P_{x_i(t)x_j(t)}v_j(t)-v_i(t)\|_{x_i(t)}^2\,dt
<
\infty.
\]
The alignment conclusion requires an additional geometric argument, because confinement does not control the time variation of parallel transport. By combining the variation of parallel transport along moving minimizing geodesics with a uniform endpoint estimate for the associated Jacobi fields, we obtained
\[
\sup_{t>0}
\max_{i,j\in[N]}
\left|
\frac{d}{dt}
\|P_{x_i(t)x_j(t)}v_j(t)-v_i(t)\|_{x_i(t)}^2
\right|
<
\infty.
\]
Barbalat's lemma then yields asymptotic velocity alignment and hence asymptotic flocking. The main contribution is therefore the control of the curvature terms generated by moving parallel transport using only an injectivity-radius confinement and a uniform curvature bound, rather than explicit formulas or constant-curvature cancellations. This argument may also be useful for other collective dynamics models on curved spaces in which states at moving base points must be compared geometrically.

\end{document}